\numberwithin{equation}{section}
\newtheorem{mydefinition}{Definition}[section]
\newtheorem{myproposition}{Proposition}[section]
\newtheorem{mybemerkung}[mydefinition]{Remark}
\newtheorem{mysatz}[mydefinition]{Theorem}
\newtheorem{mycorollary}[mydefinition]{Corollary}
\newtheorem{mylemma}[mydefinition]{Lemma}
\DeclareMathOperator{\supp}{supp}
\DeclareMathOperator{\divergence}{div}
\DeclareMathOperator{\sign}{sign}
\begin{document}

\title[Entropy solutions of nonlinear fractional Laplace equations]{Entropy solutions of doubly nonlinear fractional Laplace equations}
%
\author{Niklas Grossekemper}
\address{N. Grossekemper, Universit\"at Duisburg-Essen, Fakult\"at f\"ur Mathematik, Thea-Leymann-Str. 9, 45127 Essen, Germany}
\email{niklas.grossekemper@uni-due.de}
\author{Petra Wittbold}
\address{P. Wittbold, Universit\"at Duisburg-Essen, Fakult\"at f\"ur Mathematik, Thea-Leymann-Str. 9, 45127 Essen, Germany}
\email{petra.wittbold@uni-due.de}
\author{Aleksandra Zimmermann}
\address{A. Zimmermann, Universit\"at Duisburg-Essen, Fakult\"at f\"ur Mathematik, Thea-Leymann-Str. 9, 45127 Essen, Germany}
\email{aleksandra.zimermann@uni-due.de}
\begin{abstract}
In this contribution, we study a class of doubly nonlinear elliptic equations with bounded, merely integrable right-hand side on the whole space $\mathbb{R}^N$. The equation is driven by the fractional Laplacian $(-\Delta)^{\frac{s}{2}}$ for $s\in (0,1]$ and a strongly continuous nonlinear perturbation of first order. It is well known that weak solutions are in genreral not unique in this setting. We are able to prove an $L^1$-contraction and comparison principle and to show existence and uniqueness of entropy solutions.
\end{abstract}
\keywords{fractional Laplacian, strongly continuous perturbation, entropy solutions, vanishing viscosity, $L^1$-data, doubly nonlinear}
\maketitle
\section{Introduction}
Let $N\in\mathbb{N}$. We consider the doubly nonlinear fractional Laplace equation
\begin{align}
\label{problem}
\tag{$\mathcal{P}_f$}
b(u)+\divergence (F(u))+(-\Delta)^{\frac{s}{2}}u=f \quad \text{in} \ \mathbb{R}^N
\end{align}
where $f\in L^1(\mathbb{R}^N)\cap L^\infty(\mathbb{R}^N)$ and $b:\mathbb{R}\to\mathbb{R}$ is Lipschitz continuous, nondecreasing with $b(0)=0$, satisfying the growth condition $b(u)u\geq\lambda|u|^2$ for some $\lambda>0$. For example, these assumptions are fulfilled by $b=Id+\arctan$, and it is also possible for $b$ to be constant on finite intervals. The function $F:\mathbb{R}\to\mathbb{R}^N$ is locally Lipschitz continuous with $F(0)=0$ and $(-\Delta)^\frac{s}{2}$ is the fractional Laplacian with $s\in (0,1]$ (see Section \ref{FL} for the precise definition).\\
\ \\
The fractional Laplacian is a nonlocal generalization of the classical Laplacian which appears in many fields of analysis and probability theory. In the last two decades, there has been an intensive study of elliptic and evolutionary partial differential equations driven by the fractional Laplacian or related nonlocal operators, see, e.g., \cite{SWZ20} for a list of interesting references. In applications from physics and finance, anomalous diffusion is often modeled by a fractional Laplace evolution equation (see \cite{BV14}, Appendix B for more details and references).\\
In this contribution, we study doubly nonlinear elliptic equations of type \eqref{problem} with bounded right-hand side in $L^1(\mathbb{R}^N)$ on the whole space $\mathbb{R}^N$. The equation is driven by the fractional Laplacian $(-\Delta)^{\frac{s}{2}}$ for $s\in (0,1]$ and a strongly continuous perturbation of first order of the form $\divergence F$ with $F:\mathbb{R}\rightarrow\mathbb{R}^N$ Lipschitz continuous.\\
In \cite{DGV} it was shown that adding a fractional Laplacian with parameter $s\in(1,2)$ to a hyperbolic equation has a smoothing effect, i.e., weak solutions exist and are unique. In \cite{AA}, the fractional Burgers equation was studied for $s\in(0,1)$ and it was shown that weak solutions are not unique. Analogously to the purely hyperbolic case (see \cite{K}), an entropy formulation for fractional scalar conservation laws has been developped in \cite{A}. Consequently, one can not expect well-posedness of weak solutions of \eqref{problem} even in the case $b(u)=u$ and therefore one has to choose a more appropriate solution concept. The well-posedness of \eqref{problem} for $\divergence F\equiv 0$ has been studied in \cite{AAB10} in the framework of renormalized solutions. However, since \eqref{problem} can be interpreted as a special case of a fractional Laplace evolution equation with a first-order convection term, it seems to be more natural to apply the notion of entropy solution in our general setting. In \cite{Betal}, entropy solutions have been introduced for elliptic equations with $L^1$-data. In this contribution, we define the notion of entropy solutions for \eqref{problem}. Moreover, we show existence and the $L^1$-contraction and comparison principles for entropy solutions. In particular, we obtain uniqueness of $b(u)$ in this framework. We recall that existence and $L^1$-contraction allows us to define the $m$-accretive, densely defined, multivalued operator $A_b$ in $L^1(\mathbb{R}^N)$ by
\begin{align*}
&(v,f)\in A_b\Longleftrightarrow\\ 
& v=b(u), u\in L^1(\mathbb{R}^N) \ \text{is entropy solution to} \ \divergence (F(u))+(-\Delta)^{\frac{s}{2}}u=f.
\end{align*}
According to nonlinear semigroup theory (see, e.g., \cite{BCP}), there exists a unique mild solution $b\in C([0,T];L^1(\mathbb{R}^N))$ to the abstract Cauchy problem for $A_b$ for any given data $(v_0,f)\in L^1(\mathbb{R}^N)\times L^1(0,T;L^1(\mathbb{R}^N))$. In the next step, one wants to show that the mild solution is the unique entropy solution to the associated evolution equation
\begin{align}\label{evol}
b(u)_t+\divergence (F(u))+(-\Delta)^{\frac{s}{2}}u&=f \quad \text{in} \ (0,T)\times\mathbb{R}^N\nonumber\\
u(0,\cdot)=u_0
\end{align}
for appropriately chosen data $(u_0,f)\in L^1(\mathbb{R}^N)\times L^1(0,T;L^1(\mathbb{R}^N))$. In this manner, our study of  \eqref{problem} serves as a basis for the investigation of \eqref{evol}, which will be subject of forthcoming work.
\subsection{Outline}
We organize this contribution as follows: We start in section 2 by introducing some basic notations which we will use throughout the paper and give some preliminary results that are used within the later sections. After that, in section 3, we establish the definition of an entropy solution to the equation \eqref{problem} and formulate the two main theorems of this work. In section 4 we prove the $L^1$-contraction and comparison principle with the help of Kruzhkov's method of doubling variables. Since this result is also crucial for the existence proof later on, it is proven before the existence of entropy solutions. Furthermore, the contraction principle gives us uniqueness of the saturation function $b(u)$ and in some cases even the uniqueness of the entropy solution $u$ itself. Finally, in section 5 we prove the existence of entropy solutions. For this, we apply the method of vanishing viscosity to be able to show that there exist weak solutions for a sequence of approximating problems of higher regularity. It is then left to show that these weak solutions converge to the entropy solution of the initial problem. In the appendix, for the sake of completeness, some of the technical results used in this work are proven.

\section{Notations and preliminary results}\label{FL}
Let us introduce some notations and functions that will be frequently used. For any real number $r\in\mathbb{R}$ and $k>0$, we define the functions
\begin{align*}
\sign_0(r)=\begin{cases}
1 \quad &r>0\\
0 \quad &r=0\\
-1 \quad &r<0
\end{cases}
, \ \sign_0^+(r)=\begin{cases}
1 \quad &r>0\\
0 \quad &r\leq0
\end{cases}
, \ T_k(r)=\begin{cases}
k \quad &r>k\\
r \quad &|r|\leq k\\
-k \quad &r<-k \, .
\end{cases}
\end{align*}
	For all $u\in \mathcal{S}(\mathbb{R}^N)$, the Schwartz space of rapidly descreasing functions, and all $s\in(0,2)$, we define the fractional Laplacian $(-\Delta)^\frac{s}{2}$ by
	\begin{align}\label{fracLap}
	(-\Delta)^\frac{s}{2}u(x)&=C(N,s)P.V.\int\limits_{\mathbb{R}^N}\frac{u(x)-u(y)}{|x-y|^{N+s}} \, \mathrm{d}y\nonumber\\
	&=C(N,s)\lim\limits_{\varepsilon\to 0}\int\limits_{\mathbb{R}^N\setminus B_\varepsilon(x)}\frac{u(x)-u(y)}{|x-y|^{N+s}} \, \mathrm{d}y \, ,
	\end{align} 
	where the dimensional constant $C(N,s)>0$ is given by $C(N,s)=\frac{s\Gamma(\frac{N+s}{2})}{2\pi^{\frac{N}{2}+s}\Gamma(1-\frac{s}{2})}$ with $\Gamma$ being the gamma function. The constant $C(N,s)$ is motivated by an equivalent definition of the fractional Laplacian via Fourier transform, i.e. by $(-\Delta)^{\frac{s}{2}}u=\mathcal{F}^{-1}(|\cdot|^s\mathcal{F}(u))$, where it naturally occurs.
We further define the Gagliardo-seminorm by
		\begin{align*}
		[u]_{s/2}\coloneqq\left( \ \int\limits_{\mathbb{R}^N}\int\limits_{\mathbb{R}^N}\frac{|u(x)-u(y)|^2}{|x-y|^{N+s}} \, \mathrm{d}x\mathrm{d}y \right)^{\frac{1}{2}}
		\end{align*}
and the fractional Sobolev space of order $\frac{s}{2}$ by
\begin{align*}
H^{\frac{s}{2}}(\mathbb{R}^N)\coloneqq\{u\in L^2(\mathbb{R}^N) : [u]_{s/2}<\infty\} \, .
\end{align*}
As it is well-known, the fractional Sobolev space is a Hilbert space, if endowed with the natural scalar product which induces the norm
\begin{align*}
\|u\|_{s/2}=(\|u\|_{L^2}^2+[u]_{s/2}^2)^\frac{1}{2} \, .
\end{align*}
We point out that the fractional Sobolev space $H^\frac{s}{2}(\mathbb{R}^N)$ is strictly related to the fractional Laplacian (see \cite{HG} Proposition 3.6).\\
An important tool will be a decomposition of the fractional Laplacian which was introduced by Droniou and Imbert in \cite{DI} Theorem 1, where the authors split the fractional Laplacian into a regular and a singular part.
\begin{myproposition}
	\label{splitprop}
	If $s\in (0,2)$, then for all $u\in \mathcal{S}(\mathbb{R}^N)$, all $r>0$ and all $x\in\mathbb{R}^N$
	\begin{equation}
	\label{fraklap}
	\begin{split}
	(-\Delta)^{\frac{s}{2}}u(x)=&-C(N,s)\int\limits_{\{|z|\geq r\}}\frac{u(x+z)-u(x)}{|z|^{N+s}} \, \mathrm{d}z\\
	&-C(N,s)\int\limits_{\{|z|\leq r\}}\frac{u(x+z)-u(x)-\nabla u(x)\cdot z}{|z|^{N+s}} \, \mathrm{d}z \, .		
	\end{split}
	\end{equation}
\end{myproposition}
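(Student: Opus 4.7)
The plan is to start from the principal-value definition of $(-\Delta)^{s/2}u(x)$ given in \eqref{fracLap}, change variables via $z = y - x$ so that the integrand is written over $|z| \geq \varepsilon$, and then split the integration domain at $|z| = r$. For the outer piece $|z| \geq r$ there is no singularity and the integrand is already integrable (using rapid decay of $u$), so it matches the first term on the right-hand side of \eqref{fraklap} directly, with no limit needed. The whole point of the proposition is to handle the inner piece $\varepsilon \leq |z| \leq r$ in a way that makes the limit $\varepsilon \to 0$ transparent.

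On the annulus $\{\varepsilon \leq |z| \leq r\}$ I would use the central symmetry trick: the map $z \mapsto z/|z|^{N+s}$ is odd, so
\begin{align*}
\int_{\varepsilon \leq |z| \leq r} \frac{\nabla u(x) \cdot z}{|z|^{N+s}} \, \mathrm{d}z = 0
\end{align*}
for every $0 < \varepsilon < r$. Consequently one may freely subtract $\nabla u(x) \cdot z$ from the numerator on the inner annulus without changing the value of the integral, turning it into
\begin{align*}
\int_{\varepsilon \leq |z| \leq r} \frac{u(x+z) - u(x) - \nabla u(x)\cdot z}{|z|^{N+s}} \, \mathrm{d}z .
\end{align*}

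Now I can send $\varepsilon \to 0$ under the integral sign. Since $u \in \mathcal{S}(\mathbb{R}^N)$, a second-order Taylor expansion gives $|u(x+z) - u(x) - \nabla u(x) \cdot z| \leq \tfrac{1}{2}\|D^2 u\|_\infty |z|^2$, and hence the integrand is dominated by a constant multiple of $|z|^{2-N-s}$, which is integrable on $\{|z| \leq r\}$ precisely because $s < 2$. Dominated convergence then yields the second term on the right-hand side of \eqref{fraklap}, and adding the outer piece finishes the identity. The only step requiring any genuine care is verifying that the odd-symmetry cancellation of $\nabla u(x)\cdot z$ is legitimate (it is, because on each symmetric annulus the integrand is absolutely integrable for $\varepsilon > 0$); everything else is a routine change of variables together with the Taylor-bound dominated-convergence argument. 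There is no real obstacle beyond keeping the principal-value bookkeeping clean.
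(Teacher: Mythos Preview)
Your argument is correct and is exactly the standard proof of this decomposition. The paper does not supply its own argument here but simply cites \cite{DI}, Theorem~1; what you have written is essentially the proof given in that reference, so there is nothing to compare.
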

\begin{proof}[Proof:]
	\cite{DI}, Theorem 1.
\end{proof}
\begin{mybemerkung}
	With Proposition \ref{splitprop} it is possible to extend the definition of the fractional Laplacian in \eqref{fracLap} for all $u\in C_b^2(\mathbb{R}^N)$. Furthermore, from a well-known nonlocal integration-by-parts formula (see e.g. \cite{CJK} Lemma A.2), we get
	\begin{align}
	\label{symprop}
	\int\limits_{\mathbb{R}^N}(-\Delta)^\frac{s}{2}u(x)\varphi(x) \, \mathrm{d}x = \frac{C(N,s)}{2}\int\limits_{\mathbb{R}^N}\int\limits_{\mathbb{R}^N}\frac{(u(x)-u(y))(\varphi(x)-\varphi(y))}{|x-y|^{N+s}} \, \mathrm{d}x\mathrm{d}y \, ,
	\end{align}
	for any $\varphi\in\mathcal{D}(\mathbb{R}^N)$. The right-hand side can also be associated to a bilinear form which is well-defined on $H^\frac{s}{2}(\mathbb{R}^N)\times H^\frac{s}{2}(\mathbb{R}^N)$. 
\end{mybemerkung}
A useful convergence result, which we will use later on, can also be found in the work of Droniou and Imbert (see \cite{DI}, Proposition 1).
\begin{myproposition}
	\label{lokglm}
	Let $s\in(0,2)$ and $u\in C_b^2(\mathbb{R}^N)$. If $(u_n)_n\subseteq C_b^2(\mathbb{R}^N)$ is bounded in $L^\infty(\mathbb{R}^N)$ and such that $\mathrm{D}^2u_n \to\mathrm{D}^2u$ locally uniformly in $\mathbb{R}^N$ for $n\to\infty$, then
	\begin{align*}
	(-\Delta)^{\frac{s}{2}}u_n\to(-\Delta)^{\frac{s}{2}}u \ \ \ \text{locally uniformly in} \ \mathbb{R}^N \ \text{for} \ n\to\infty \, .
	\end{align*}
\end{myproposition}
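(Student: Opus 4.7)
My starting point would be to rewrite the fractional Laplacian in its \emph{symmetric} form
\[
(-\Delta)^{s/2} v(x) = -\frac{C(N,s)}{2}\int_{\mathbb{R}^N} \frac{v(x+z)+v(x-z)-2v(x)}{|z|^{N+s}}\,\mathrm{d}z,
\]
valid for $v\in C_b^2(\mathbb{R}^N)$; this follows from Proposition~\ref{splitprop} by symmetrising the outer integral under the change of variables $z\mapsto -z$. The second-difference $v(x+z)+v(x-z)-2v(x)$ is of order $|z|^2\|\mathrm{D}^2 v\|_\infty$ near the origin and bounded by $4\|v\|_\infty$ at infinity, so the integrand is absolutely integrable thanks to $s\in(0,2)$ and we need no principal value.

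Fix a compact set $K\subseteq\mathbb{R}^N$ and $\varepsilon>0$, and set $M:=\sup_n\|u_n\|_\infty+\|u\|_\infty$, which is finite by hypothesis. My plan is then a standard near-field/far-field split at some radius $R>0$, to be chosen. For the far piece I would estimate
\[
\biggl|\int_{|z|\ge R}\frac{(u_n-u)(x+z)+(u_n-u)(x-z)-2(u_n-u)(x)}{|z|^{N+s}}\,\mathrm{d}z\biggr|\le 4M\int_{|z|\ge R}\frac{\mathrm{d}z}{|z|^{N+s}},
\]
uniformly in $x\in K$ and in $n$, so by choosing $R$ sufficiently large (depending on $\varepsilon$, $M$, $s$, $N$) this contribution can be made smaller than $\varepsilon$. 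For the near piece I would set $K':=\{x+z:\,x\in K,\,|z|\le R\}$, which is compact, whence $\delta_n:=\|\mathrm{D}^2u_n-\mathrm{D}^2u\|_{L^\infty(K')}\to 0$ by the hypothesis, and apply a second-order Taylor expansion to $u_n-u$ to get
\[
\bigl|(u_n-u)(x+z)+(u_n-u)(x-z)-2(u_n-u)(x)\bigr|\le \delta_n|z|^2
\]
for all $x\in K$ and $|z|\le R$. Integrating against $|z|^{-N-s}$ over $|z|\le R$ yields a bound of order $\delta_n R^{2-s}\to 0$ as $n\to\infty$.

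The only genuine obstacle is that the hypotheses do \emph{not} imply $u_n\to u$ pointwise, since local convergence of the second derivatives determines $u_n-u$ only up to an $n$-dependent affine function. Consequently one cannot pass to the limit directly in the \emph{regular part} of the Droniou--Imbert decomposition, where the term $u(x+z)-u(x)$ appears. The symmetrisation step bypasses this issue precisely because the second-difference operator annihilates affine functions: the far-field contribution is then controlled purely by the uniform $L^\infty$-bound $M$, with no need for pointwise convergence of $u_n$. Once this is observed, the rest is a routine application of Taylor's theorem and the local uniform convergence of the Hessians.
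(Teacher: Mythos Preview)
Your argument is correct. The paper itself does not supply a proof here---it simply cites \cite{DI}, Proposition~1---so there is no detailed argument to compare against. Your approach via the symmetric second-difference representation is the natural one and works cleanly: the far-field is controlled by the uniform $L^\infty$-bound alone, the near-field by the local uniform convergence of the Hessians via Taylor, and the two-scale split with $R$ chosen first and then $n$ large is exactly right.

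Your remark about the subtlety is also well taken. As stated, the hypotheses allow $u_n-u$ to differ from zero by an $n$-dependent bounded function whose Hessian tends to zero locally uniformly (e.g.\ constants $c_n$ that do not converge), so one cannot expect $u_n\to u$ pointwise; the first-difference term $u(x+z)-u(x)$ in the raw Droniou--Imbert split of Proposition~\ref{splitprop} is therefore awkward to handle directly. The symmetrised form kills this ambiguity because the second difference annihilates affine functions, which is precisely why your far-field estimate needs only the bound $M$ and no convergence information. This is a clean way to organise the proof and makes the role of each hypothesis transparent.
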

\begin{proof}[Proof:]
	\cite{DI}, Proposition 1.
\end{proof}

\section{Concept of solution and main results}
Now, we introduce the notion of entropy solutions which is adapted from the work of N. Alibaud (see \cite{A}) and point out where it fits within the well-known concepts of classical and weak solutions.
	\begin{mydefinition}
\label{definitionentropieloesung}
A function $u\in L^\infty(\mathbb{R}^N)$ is called entropy solution to \eqref{problem} if for all $r>0$, all $\varphi\in \mathcal{D}(\mathbb{R}^N)$ with $\varphi\geq 0$, all $\eta\in C^2(\mathbb{R})$ convex and all $\phi=(\phi_1,\dots,\phi_N)$ with $\phi_i' =\eta'F_i'$ for $i=1,\dots,N$ it holds
\begin{equation}
\label{entropiebedingung}
\begin{split}
&\int\limits_{\mathbb{R}^N}(f(x)-b(u(x)))\eta '(u(x))\varphi(x)+\phi(u(x))\cdot\nabla\varphi(x) \, \mathrm{d}x \\
&+C(N,s)\int\limits_{\mathbb{R}^N}\int\limits_{\{|z|\geq r\}}\eta'(u(x))\frac{u(x+z)-u(x)}{|z|^{N+s}}\varphi(x) \, \mathrm{d}z\mathrm{d}x\\
&+C(N,s)\int\limits_{\mathbb{R}^N}\int\limits_{\{|z|\leq r\}}\eta(u(x))\frac{\varphi(x+z)-\varphi(x)-\nabla\varphi(x)\cdot z}{|z|^{N+s}} \, \mathrm{d}z\mathrm{d}x\geq 0 \, .
\end{split}
\end{equation}
\end{mydefinition}
\begin{mybemerkung}
A function $\eta$, as in Definition \ref{definitionentropieloesung}, is called entropy. The corresponding function $\phi$ is called entropy flux and $(\eta,\phi)$ is called entropy-flux-pair. 
\end{mybemerkung}
	\begin{myproposition}
	\label{klassischeentropieschwacheloesung}
	\ 
	\begin{enumerate}[i.)]
		\item Classical solutions to \eqref{problem}, i.e. $u\in C_b^2(\mathbb{R}^N)$ satisfying the equation \eqref{problem} pointwise for all $x\in \mathbb{R}^N$, are entropy solutions. 
		\item Entropy solutions are weak solutions in the sense that
		\begin{align*}
		\int\limits_{\mathbb{R}^N}(f(x)-b(u(x)))\varphi(x)+F(u(x))\cdot\nabla \varphi(x) - u(x)(-\Delta)^{\frac{s}{2}}\varphi(x) \, \mathrm{d}x = 0
		\end{align*}
		$\forall\varphi\in\mathcal{D}(\mathbb{R}^N)$.
	\end{enumerate}
\end{myproposition}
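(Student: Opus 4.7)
For \emph{(i)}, the plan is to test the pointwise identity $b(u)+\divergence F(u)+(-\Delta)^{s/2}u=f$ against $\eta'(u)\varphi$ and integrate over $\mathbb{R}^N$. The $b$- and $f$-contributions combine directly into the $\int(f-b(u))\eta'(u)\varphi\,dx$ summand. The first-order term reduces, via the chain rule $\eta'(u)F'(u)=\phi'(u)$ and classical integration by parts, to $-\int\phi(u)\cdot\nabla\varphi\,dx$. Next I would decompose the fractional Laplacian by Proposition \ref{splitprop} with the given parameter $r$: the regular (far) part reproduces exactly the second summand in \eqref{entropiebedingung}. The only non-trivial step is the singular (near) part, which I would handle via the pointwise convexity inequality
\[
\eta'(u(x))\bigl(u(x+z)-u(x)-\nabla u(x)\cdot z\bigr)\le \eta(u(x+z))-\eta(u(x))-\nabla\eta(u(x))\cdot z,
\]
obtained from $\eta(b)-\eta(a)\ge\eta'(a)(b-a)$ together with $\nabla\eta(u)=\eta'(u)\nabla u$. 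Multiplying by the non-positive factor $-C(N,s)\varphi(x)/|z|^{N+s}$ and integrating over $\{|z|\le r\}\times\mathbb{R}^N$ provides a lower bound in which the second-difference operator now acts on $\eta(u)$.

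It then remains to transfer this second difference from $\eta(u)$ onto $\varphi$. I plan to do this by symmetrization: the substitution $z\mapsto -z$ in the inner integral and averaging yields
\[
\int_{\mathbb{R}^N}\int_{\{|z|\le r\}}\varphi(x)\frac{\eta(u(x+z))-\eta(u(x))-\nabla\eta(u(x))\cdot z}{|z|^{N+s}}\,dz\,dx=\tfrac12\int_{\mathbb{R}^N}\int_{\{|z|\le r\}}\varphi(x)\frac{\eta(u(x+z))+\eta(u(x-z))-2\eta(u(x))}{|z|^{N+s}}\,dz\,dx,
\]
and Fubini together with the translations $x\mapsto x\mp z$ (carried out first on the truncated region $r_0\le|z|\le r$, where every piece is separately integrable, before letting $r_0\to 0$) exchanges the roles of $\eta(u)$ and $\varphi$, producing exactly the final summand in \eqref{entropiebedingung}. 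Collecting all three estimates yields the entropy inequality.

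For \emph{(ii)}, both $\eta(u)=u$ and $\eta(u)=-u$ are $C^2$ and (weakly) convex, with entropy fluxes $\phi=F$ and $\phi=-F$ respectively. Inserting them into Definition \ref{definitionentropieloesung} against the same nonnegative $\varphi$ produces two opposite inequalities, hence equality with $\eta(u)=u$ and $\phi=F$. The sign restriction on $\varphi$ is removed by writing an arbitrary $\psi\in\mathcal{D}(\mathbb{R}^N)$ as $\psi=(\psi+M\chi)-M\chi$ with $\chi\in\mathcal{D}(\mathbb{R}^N)$ nonnegative, $\chi\equiv 1$ on $\supp\psi$, and $M=\|\psi\|_\infty$, and then invoking linearity in the test function. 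Finally, Proposition \ref{splitprop} applied to $\psi$ together with a Fubini translation in the far integral identify the two $C(N,s)$-terms with $-\int u(x)(-\Delta)^{s/2}\psi(x)\,dx$, yielding the claimed weak formulation. The main obstacle throughout lies in step (i), specifically the symmetrization exchanging second-difference operators between $\eta(u)$ and $\varphi$; once that identity is in hand, the rest is routine bookkeeping.
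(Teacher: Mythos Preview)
Your proposal is correct and follows the same overall architecture as the paper's proof: convexity inequality for the near part, direct handling of the far part, and the choice $\eta=\pm\operatorname{id}$ for (ii). The one genuine difference is in how you carry out the ``transfer'' of the second-difference operator from $\eta(u)$ to $\varphi$ in part (i). The paper writes the second difference via Taylor's formula as $\int_0^1(1-\tau)\mathrm{D}^2(\eta(u(x+\tau z)))z\cdot z\,d\tau$, integrates by parts once in $x$ to move one derivative onto $\varphi$, performs the substitution $(\tau,x,z)\mapsto(\tau,x+\tau z,-z)$, and then reverses the steps with the roles of $\eta(u)$ and $\varphi$ interchanged. Your route---symmetrize in $z$, truncate to an annulus $r_0\le|z|\le r$, translate in $x$ term by term via Fubini, and pass $r_0\to 0$---is a more elementary alternative that avoids Taylor expansions and integration by parts altogether; it only uses boundedness of $\eta(u)$ and compact support of $\varphi$ to justify Fubini on the annulus, and the $C_b^2$ regularity only to ensure convergence as $r_0\to0$. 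Both arguments yield the same identity; the paper's version is perhaps more explicit about how the $C^2$ structure is used, while yours makes the symmetry of the kernel do the work. For (ii) you are slightly more careful than the paper in explicitly removing the sign restriction on $\varphi$; the paper leaves this implicit.
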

\begin{proof}[Proof:]
	See Appendix \ref{proof1}.
\end{proof}
The main goal of this paper is to prove the following theorems, concerning the existence and uniqueness of entropy solutions.
\begin{mysatz}
	\label{existenzsatz}
	For all $f\in L^1(\mathbb{R}^N)\cap L^\infty(\mathbb{R}^N)$ there exists an entropy solution $u$ to (\ref{problem}) such that $b(u)\in L^1(\mathbb{R}^N)$.
\end{mysatz}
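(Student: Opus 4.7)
The approach is vanishing viscosity. For each $\varepsilon > 0$, I introduce the regularized problem
\[
b(u_\varepsilon) - \varepsilon \Delta u_\varepsilon + \divergence(F(u_\varepsilon)) + (-\Delta)^{s/2} u_\varepsilon = f \qquad \text{in } \mathbb{R}^N,
\]
where the added classical Laplacian restores coercivity in $H^1(\mathbb{R}^N)$. Existence of a weak solution $u_\varepsilon \in H^1(\mathbb{R}^N) \cap L^\infty(\mathbb{R}^N)$ should follow from standard pseudomonotone operator arguments, or from a Galerkin scheme on balls $B_R$ combined with the exhaustion $R\to\infty$ and a prior truncation of $F$; the monotonicity of $b$, the local Lipschitz character of $F$, and the accretivity of $(-\Delta)^{s/2}$ make the left-hand side a well-posed operator.

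Next I derive $\varepsilon$-uniform a priori estimates. Testing with $(u_\varepsilon - M)^+$ and $(u_\varepsilon + M)^-$ at level $M = \|f\|_\infty/\lambda$, using the entropy-flux trick to kill the convection contribution and a fractional Kato-type inequality for $(-\Delta)^{s/2}$, produces the $L^\infty$-bound $\|u_\varepsilon\|_{L^\infty} \leq \|f\|_\infty/\lambda$. Testing with a smooth approximation of $\sign_0(u_\varepsilon)$ and using the $L^1$-accretivity of both $-\varepsilon\Delta$ and $(-\Delta)^{s/2}$ yields $\|b(u_\varepsilon)\|_{L^1} \leq \|f\|_{L^1}$. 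Testing with $u_\varepsilon$ itself and using \eqref{symprop} together with $b(u)u \geq \lambda u^2$ provides uniform bounds on $\|u_\varepsilon\|_{L^2}$ and on the Gagliardo seminorm $[u_\varepsilon]_{s/2}$. The Rellich embedding $H^{s/2}(B_R) \hookrightarrow L^2(B_R)$ combined with a diagonal extraction over $R \to \infty$ then yields a subsequence with $u_\varepsilon \to u$ a.e.\ and in $L^p_{\mathrm{loc}}(\mathbb{R}^N)$ for every $p < \infty$, with $b(u) \in L^1(\mathbb{R}^N)$ by Fatou.

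To extract the entropy inequality I multiply the regularized equation by $\eta'(u_\varepsilon)\varphi$ with convex $\eta \in C^2(\mathbb{R})$ and $0\leq \varphi \in \mathcal{D}(\mathbb{R}^N)$. Integration by parts on the viscous term gives $\varepsilon\int \eta''(u_\varepsilon)|\nabla u_\varepsilon|^2\varphi + \varepsilon\int \eta'(u_\varepsilon)\nabla u_\varepsilon\cdot\nabla\varphi$; the first summand is nonnegative, so after placing it on the correct side one obtains an inequality in the sign of Definition \ref{definitionentropieloesung}, while the second tends to $0$ because $\varepsilon\|\nabla u_\varepsilon\|_{L^2}^2$ is bounded. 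The convection rewrites as $-\int \phi(u_\varepsilon)\cdot\nabla\varphi$ via the chain rule $\phi_i' = \eta' F_i'$. For the fractional Laplacian I apply the Droniou--Imbert splitting from Proposition \ref{splitprop} at a fixed scale $r > 0$: the regular part $(|z|\geq r)$ already has the form required by \eqref{entropiebedingung}, and in the singular part $(|z|\leq r)$ the pointwise inequality
\[
\eta'(u_\varepsilon(x))\bigl(u_\varepsilon(x+z) - u_\varepsilon(x) - \nabla u_\varepsilon(x)\cdot z\bigr) \leq \eta(u_\varepsilon(x+z)) - \eta(u_\varepsilon(x)) - \nabla\bigl(\eta(u_\varepsilon)\bigr)(x)\cdot z,
\]
valid by convexity of $\eta$ and the chain rule, followed by a nonlocal integration by parts (using the symmetry $z\mapsto -z$ of the kernel and the odd-symmetry vanishing of $\int_{|z|\leq r} z/|z|^{N+s}\,\mathrm{d}z$) transfers the second-order difference onto $\varphi$ and produces the $\eta(u_\varepsilon)$-based singular term of \eqref{entropiebedingung}. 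Passing to the limit $\varepsilon \to 0$ uses a.e.\ convergence of $u_\varepsilon$ and dominated convergence, controlled by the $L^\infty$-bound and the $C^2$-Taylor remainder of $\varphi$ near $z = 0$.

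The main obstacle is the joint limit passage in the singular fractional Laplacian term together with the nonlinearities $\eta(u_\varepsilon)$, $\eta'(u_\varepsilon)$ and $\phi(u_\varepsilon)$: all three require \emph{strong} rather than weak convergence of $u_\varepsilon$, while the available a priori estimates only supply local strong compactness. The uniform $L^\infty$-bound is essential both as a substitute for global integrability and as a uniform majorant rendering the singular kernel integrable against a dominating function independent of $\varepsilon$. Once the inequality is verified for all admissible entropy-flux pairs $(\eta,\phi)$, the limit $u$ satisfies Definition \ref{definitionentropieloesung}, and $b(u) \in L^1(\mathbb{R}^N)$ is the already-established Fatou bound.
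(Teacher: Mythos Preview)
Your proposal is correct and follows essentially the same vanishing-viscosity strategy as the paper: regularize by adding $-\varepsilon\Delta$ (the paper also adds $\varepsilon u$, and truncates $F$ to $F_\varepsilon = F\circ T_{1/\varepsilon}$, then uses Zarantonello plus a Schauder--Tikhonov fixed point rather than a pseudomonotone argument, but these are implementation details), derive the same $L^\infty$, $L^1$ and $H^{s/2}$ a~priori bounds by the test functions you name, extract a.e.\ convergence via local compactness, and pass to the limit in the approximate entropy inequality by dominated convergence. The only point you gloss over that the paper makes explicit is the intermediate $H^2_{\mathrm{loc}}$ regularity of $u_\varepsilon$ (obtained by reading $-\varepsilon\Delta u_\varepsilon \in L^2$ off the equation), which is what justifies writing the Droniou--Imbert splitting and the pointwise convexity inequality for the singular part of the fractional Laplacian; once you insert that step, your sketch matches the paper's proof.
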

	\begin{mysatz}
	\label{kontraktionsprinzip}
	Let $f,\tilde{f}\in L^1(\mathbb{R}^N)\cap L^\infty(\mathbb{R}^N)$ and $u,\tilde{u}\in L^\infty(\mathbb{R}^N)$ entropy solutions to \eqref{problem} and $(\mathcal{P}_{\tilde{f}})$ respectively such that $b(u),b(\tilde{u})\in L^1(\mathbb{R}^N).$ Then it holds true that
	\begin{align}
	\int\limits_{\mathbb{R}^N}|b(u(x))-b(\tilde{u}(x))| \, \mathrm{d}x \leq \int\limits_{\mathbb{R}^N}|f(x)-\tilde{f}(x)| \, \mathrm{d}x \, .
	\end{align}
\end{mysatz}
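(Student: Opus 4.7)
The plan is to apply Kruzhkov's doubling of variables technique, extended to the nonlocal setting as in Alibaud \cite{A}. Let $(\eta_\varepsilon)_{\varepsilon>0} \subset C^2(\mathbb{R})$ be a family of smooth convex approximations of $s \mapsto |s|$ with $\eta_\varepsilon' \to \sign_0$ pointwise and $\eta_\varepsilon''$ supported in $[-\varepsilon,\varepsilon]$; for each fixed $k\in\mathbb{R}$, the shifted function $s \mapsto \eta_\varepsilon(s-k)$ is a valid entropy in the sense of Definition \ref{definitionentropieloesung}, with flux components $\phi_{\varepsilon,k,i}(s) = \int_k^s \eta_\varepsilon'(\sigma-k)\,F_i'(\sigma)\,\mathrm{d}\sigma$. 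As test function I would take $\Phi_{n,R}(x,y) = \rho_n(x-y)\,\psi_R\!\bigl(\tfrac{x+y}{2}\bigr)$, where $\rho_n$ is a standard symmetric mollifier on $\mathbb{R}^N$ and $\psi_R(x) = \psi(x/R)$ for a fixed $\psi\in\mathcal{D}(\mathbb{R}^N)$ with $\psi\equiv 1$ near the origin.

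The core manoeuvre is to insert into \eqref{entropiebedingung} the pair $(\eta,\varphi) = (\eta_\varepsilon(\,\cdot\, - \tilde u(y)),\,\Phi_{n,R}(\cdot,y))$ in the entropy inequality for $u(x)$, integrate over $y$, and dually apply $(\eta,\varphi) = (\eta_\varepsilon(\,\cdot\, - u(x)),\,\Phi_{n,R}(x,\cdot))$ in the entropy inequality for $\tilde u(y)$ and integrate over $x$; the two inequalities are then added. The local $b$-, $f$-, and $\divergence F$-terms are handled by classical Kruzhkov bookkeeping: sending $n \to \infty$ concentrates $\rho_n$ on the diagonal and $\varepsilon \to 0$ collapses $\eta_\varepsilon'(u-\tilde u) \to \sign_0(u-\tilde u)$, producing a contribution of the form $(f - \tilde f)\sign_0(u - \tilde u)\psi_R$ plus a flux correction tested against $\nabla\psi_R$ that vanishes as $R \to \infty$ thanks to the Lipschitz bound on $F$ and $\|\nabla\psi_R\|_\infty = O(R^{-1})$.

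The main obstacle is the treatment of the two nonlocal contributions arising from the splitting of Proposition \ref{splitprop}. For the singular part ($|z|\leq r$), the Taylor remainder $\psi_R(x+z) - \psi_R(x) - \nabla\psi_R(x)\cdot z$ is bounded by $\tfrac{1}{2}\|D^2\psi_R\|_\infty |z|^2$, and $|\eta_\varepsilon(u(x))|\leq \|u\|_\infty$, so the contribution is of order $r^{2-s}$ uniformly in $(n,\varepsilon)$ and vanishes as $r\to 0$. For the regular part ($|z|\geq r$), the convexity inequality $\eta_\varepsilon'(a)(b-a) \leq \eta_\varepsilon(b) - \eta_\varepsilon(a)$ applied to $a = u(x) - \tilde u(y)$, $b = u(x+z) - \tilde u(y)$ (and symmetrically, $a = \tilde u(y) - u(x)$, $b = \tilde u(y+z) - u(x)$), combined with the change of variable $z\mapsto -z$ in one copy, allows for a symmetrization analogous to \eqref{symprop}. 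Passing $n \to \infty$, then $\varepsilon \to 0$ and $r \to 0$, the combined nonlocal contribution reduces to a remainder of the form $\int_{\mathbb{R}^N}|u - \tilde u|\,(-\Delta)^{s/2}\psi_R\,\mathrm{d}x$. I expect the delicate point to be the careful justification of this chain of limits, in particular the applications of Fubini and dominated convergence for the double integrals against the kernel $|z|^{-N-s}$ and the uniform-in-$(n,\varepsilon)$ control of the singular part.

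Finally, letting $R \to \infty$, the growth condition $b(u)u \geq \lambda|u|^2$ yields $|u|\leq \lambda^{-1}|b(u)|$, so that $b(u),b(\tilde u) \in L^1(\mathbb{R}^N)$ forces $u,\tilde u \in L^1(\mathbb{R}^N)$, whence $u - \tilde u \in L^1$. Using the scaling $\|(-\Delta)^{s/2}\psi_R\|_\infty = R^{-s}\|(-\Delta)^{s/2}\psi\|_\infty \to 0$, the nonlocal remainder vanishes. The right-hand side tends to $\int|f - \tilde f|\,\mathrm{d}x$ by dominated convergence, and the left-hand side converges to $\int(b(u) - b(\tilde u))\sign_0(u - \tilde u)\,\mathrm{d}x = \int|b(u) - b(\tilde u)|\,\mathrm{d}x$ by monotonicity of $b$, yielding the contraction inequality of Theorem \ref{kontraktionsprinzip}.
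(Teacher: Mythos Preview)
Your overall strategy---Kruzhkov doubling with Alibaud's nonlocal splitting, convexity for the regular part, and the growth condition plus scaling of $(-\Delta)^{s/2}\psi_R$ for the final $R\to\infty$ step---matches the paper's and is correct in outline. Your scaling identity $\|(-\Delta)^{s/2}\psi_R\|_\infty=R^{-s}\|(-\Delta)^{s/2}\psi\|_\infty$ is in fact cleaner than the paper's appeal to Proposition~\ref{lokglm}.

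There is, however, a genuine gap in your handling of the singular part, and hence in your proposed order of limits. The test function entering the entropy inequality for $u(x)$ is $\varphi(x)=\Phi_{n,R}(x,y)=\rho_n(x-y)\,\psi_R\bigl(\tfrac{x+y}{2}\bigr)$, \emph{not} $\psi_R(x)$; the Taylor remainder in \eqref{entropiebedingung} is therefore that of $\Phi_{n,R}(\cdot,y)$, and its bound picks up $\|D^2\rho_n\|_\infty\sim n^{N+2}$. Even after adding the inequalities for $u$ and $\tilde u$, the combined numerator still contains the second difference $\rho_n(w+z)+\rho_n(w-z)-2\rho_n(w)$ (with $w=x-y$), which does \emph{not} cancel. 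Estimating $\int_{\mathbb{R}^N}|\rho_n(w+z)+\rho_n(w-z)-2\rho_n(w)|\,\mathrm{d}w\le\min(4,Cn^2|z|^2)$ and integrating against $|z|^{-N-s}$ over $\{|z|\le r\}$ gives a contribution of order $n^s$, not $r^{2-s}$. So the claim ``of order $r^{2-s}$ uniformly in $(n,\varepsilon)$'' is false, and sending $n\to\infty$ before $r\to0$ cannot work.

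The paper resolves this by reversing the order: for a \emph{generic} $\psi\in\mathcal{D}(\mathbb{R}^N\times\mathbb{R}^N)$ it first sends $r\to0$ (this is Lemma~\ref{l1}), after which the regular and singular pieces recombine into a single nonlocal operator acting through the diagonal shift $(x,y)\mapsto(x+z,y+z)$. Only then does one specialize $\psi(x,y)=\rho_\mu(y-x)\Phi(x)$; since the diagonal shift leaves $y-x$ invariant, the resulting term factors as $\rho_\mu(y-x)$ times $-(-\Delta)^{s/2}\Phi(x)$ and is uniform in $\mu$, so the mollifier limit goes through. Your symmetric test function enjoys the same invariance, so the fix is simply to send $r\to0$ first (for fixed $n$), obtaining $-\int\!\!\int|u(x)-\tilde u(y)|\,\rho_n(x-y)\,(-\Delta)^{s/2}\psi_R\bigl(\tfrac{x+y}{2}\bigr)\,\mathrm{d}x\,\mathrm{d}y$, and \emph{then} let $n\to\infty$; the rest of your argument then goes through.
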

\begin{mybemerkung}
If $f=\tilde{f}$ almost everywhere, it follows directly from Theorem \ref{kontraktionsprinzip} that $b(u)=b(\tilde{u})$ almost everywhere holds. If the nonlinearity $b$ is strictly monotone, we further obtain $u=\tilde{u}$ almost everywhere. Therefore, we know that the entropy solution $u$ of \eqref{problem} is unique. In general, i.e. for just nondecreasing $b$, we can not expect uniqueness of the entropy solution, but only for the saturation function $b(u)$.
\end{mybemerkung}

\section{$L^1$-contraction principle}
In this section, we will prove the $L^1$-contraction principle by applying Kruzhkov's method of doubling variables (see \cite{K}). The $L^1$-contraction principle and $L^1$-comparison principle (see \eqref{vergleichsprinzip}) will play an important role in the proof of the existence theorem.
\begin{proof}[Proof of Theorem \ref{kontraktionsprinzip}:]
	For all $k\in\mathbb{R}$ we choose the entropy-flux-pair $(\eta_k,\phi_k)$, which is defined by
	\begin{align*}
	&\eta_k(a)=|a-k| \, ,\\
	&\phi_k(a)=\int\limits_k^a\sign(\tau-k)F'(\tau) \, \mathrm{d}\tau
	\end{align*}
	for all $a\in\mathbb{R}$. Since these entropies are not smooth enough, we have to show first that we can use them in the entropy inequality (\ref{entropiebedingung}). To this end, we approximate $\eta_k$ in the following way:
	\begin{align*}
	\eta_k^n:\mathbb{R}\to\mathbb{R}, \ \ \ a\mapsto\int\limits_k^a \varrho^n(\sigma-k) \, \mathrm{d}\sigma
	\end{align*}
	with
	\begin{align*}
	\varrho^n:\mathbb{R}\to\mathbb{R}, \ \ \ a\mapsto\begin{cases}
	1 \ &\text{for} \ a>\frac{1}{n}\\
	\sin(na\frac{\pi}{2}) \ &\text{for} \ -\frac{1}{n}\leq a \leq \frac{1}{n}\\
	-1 \ &\text{for} \ a<-\frac{1}{n} \, .
	\end{cases}
	\end{align*}
	Then, for all $n\in\mathbb{N}$, $\eta_k^n\in C^2(\mathbb{R})$ is convex and such that $(\eta_k^n)''$ has compact support. Now we can use them in the entropy inequality \eqref{entropiebedingung} and prove, with the help of Lebesgue's dominated convergence theorem, that $(\eta_k,\phi_k)$ are valid entropy-flux-pairs for all $k\in\mathbb{R}$. We can now apply the method of doubling variables.\\
	For the entropy solution $u$ of \eqref{problem} let $y\in\mathbb{R}^N$ be fixed but arbitrary and choose $\varphi(x)=\varphi_y(x)=\psi(x,y)$ with $\psi\in\mathcal{D}(\mathbb{R}^N\times\mathbb{R}^N)$, $\psi\geq 0$ and $\eta=\eta_k$ with $k=\tilde{u}(y)$. If we apply these in (\ref{entropiebedingung}) and integrate with respect to $y$ over $\mathbb{R}^N$, we get
	\begin{align*}
	&\int\limits_{\mathbb{R}^N}\int\limits_{\mathbb{R}^N}(f(x)-b(u(x)))\sign(u(x)-\tilde{u}(y))\psi(x,y)+\zeta(u(x),\tilde{u}(y))\cdot\nabla_x\psi(x,y) \, \mathrm{d}y\mathrm{d}x \\
	&+C(N,s)\int\limits_{\mathbb{R}^N}\int\limits_{\mathbb{R}^N}\int\limits_{\{|z|\geq r\}}\sign(u(x)-\tilde{u}(y))\frac{u(x+z)-u(x)}{|z|^{N+s}}\psi(x,y) \, \mathrm{d}z\mathrm{d}y\mathrm{d}x\\
	&+C(N,s)\int\limits_{\mathbb{R}^N}\int\limits_{\mathbb{R}^N}\int\limits_{\{|z|\leq r\}}|u(x)-\tilde{u}(y)|\frac{\psi(x+z,y)-\psi(x,y)-\nabla_x\psi(x,y)\cdot z}{|z|^{N+s}} \, \mathrm{d}z\mathrm{d}y\mathrm{d}x\geq 0 \, ,
	\end{align*}
	where the symmetric function $\zeta$ is given by $\zeta(a,b)=F(\max\{a,b\})-F(\min\{a,b\})$. Analogously, for the entropy solution $\tilde{u}$ of $(\mathcal{P}_{\tilde{f}})$  and $x\in\mathbb{R}^N$ fixed but arbitrary, we choose $\varphi(y)=\varphi_x(y)=\psi(x,y)$ as above, $\eta=\eta_k$ with $k=u(x)$, apply these in (\ref{entropiebedingung}) and integrate with respect to $x$ over $\mathbb{R}^N$. If we add these inequalities, we get
	\begin{align*}
	0&\leq\int\limits_{\mathbb{R}^N}\int\limits_{\mathbb{R}^N}(f(x)-\tilde{f}(y))\sign(u(x)-\tilde{u}(y))\psi(x,y)-|b(u(x))-b(\tilde{u}(y))|\psi(x,y)\\
	&+\zeta(u(x),\tilde{u}(y))\cdot(\nabla_x+\nabla_y)\psi(x,y) \, \mathrm{d}y\mathrm{d}x \\
	&+C(N,s)\int\limits_{\mathbb{R}^N}\int\limits_{\mathbb{R}^N}\int\limits_{\{|z|\geq r\}}\frac{|u(x+z)-\tilde{u}(y+z)|-|u(x)-\tilde{u}(y)|}{|z|^{N+s}}\psi(x,y) \, \mathrm{d}z\mathrm{d}y\mathrm{d}x\\
	&+C(N,s)\int\limits_{\mathbb{R}^N}\int\limits_{\mathbb{R}^N}\int\limits_{\{|z|\leq r\}}|u(x)-\tilde{u}(y)|\\
	&\times\frac{\psi(x+z,y)+\psi(x,y+z)-2\psi(x,y)-(\nabla_x+\nabla_y)\psi(x,y)\cdot z}{|z|^{N+s}} \, \mathrm{d}z\mathrm{d}y\mathrm{d}x \, .
	\end{align*}
	In the next step we pass to the limit with $r\to0$.
	\begin{mylemma}
		\label{l1}
		\begin{align*}
		\lim\limits_{r\to 0} \ &C(N,s)\int\limits_{\mathbb{R}^N}\int\limits_{\mathbb{R}^N}\int\limits_{\{|z|\leq r\}}|u(x)-\tilde{u}(y)|\\
		&\times\frac{\psi(x+z,y)+\psi(x,y+z)-2\psi(x,y)-(\nabla_x+\nabla_y)\psi(x,y)\cdot z}{|z|^{N+s}} \, \mathrm{d}z\mathrm{d}y\mathrm{d}x = 0 \, .
		\end{align*}
	\end{mylemma}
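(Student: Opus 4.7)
The plan is to recognize that the bracketed expression in the numerator is precisely a second-order Taylor remainder, which absorbs the singularity of the kernel $|z|^{-(N+s)}$ at $z=0$. For fixed $(x,y)\in\mathbb{R}^N\times\mathbb{R}^N$, the map $z\mapsto \psi(x+z,y)+\psi(x,y+z)-2\psi(x,y)$ vanishes at $z=0$ and has gradient $(\nabla_x+\nabla_y)\psi(x,y)$ there. Since $\psi\in\mathcal{D}(\mathbb{R}^N\times\mathbb{R}^N)$, Taylor's theorem with second-order integral remainder (applied separately to the maps $z\mapsto\psi(x+z,y)$ and $z\mapsto\psi(x,y+z)$ and summed) gives
\[
\bigl|\psi(x+z,y)+\psi(x,y+z)-2\psi(x,y)-(\nabla_x+\nabla_y)\psi(x,y)\cdot z\bigr|\leq C_\psi|z|^2
\]
uniformly on $\mathbb{R}^N\times\mathbb{R}^N$, where $C_\psi$ depends only on $\|\mathrm{D}^2\psi\|_{L^\infty}$.

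Next I would exploit the compact support of $\psi$. For $r\leq 1$, there exists a compact set $K\subset \mathbb{R}^N\times\mathbb{R}^N$ (a suitable fattening of $\mathrm{supp}\,\psi$) outside which all of $\psi(x,y)$, $\psi(x+z,y)$, $\psi(x,y+z)$, $\nabla_x\psi(x,y)$ and $\nabla_y\psi(x,y)$ vanish simultaneously for all $|z|\leq r$, hence the numerator vanishes for $(x,y)\notin K$. Combining this with the $L^\infty$-bound $|u(x)-\tilde{u}(y)|\leq \|u\|_\infty+\|\tilde{u}\|_\infty$, the absolute value of the triple integral is dominated by
\[
C(N,s)\,C_\psi\,(\|u\|_\infty+\|\tilde{u}\|_\infty)\,|K|\int_{\{|z|\leq r\}}\frac{|z|^2}{|z|^{N+s}}\,\mathrm{d}z.
\]
Passing to spherical coordinates evaluates the remaining $z$-integral as a constant multiple of $r^{2-s}/(2-s)$. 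Since $s\in(0,1]\subset(0,2)$ gives $2-s>0$, this bound tends to $0$ as $r\to 0$, proving the lemma.

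I do not anticipate a genuine obstacle: the only points requiring (routine) care are the uniform second-order Taylor bound, which follows at once from $\psi\in\mathcal{D}(\mathbb{R}^N\times\mathbb{R}^N)$, and the extraction of a common compact set $K$ in the $(x,y)$-variables, which is immediate from compactness of $\mathrm{supp}\,\psi$ in $\mathbb{R}^{2N}$. The integrability of $|z|^{2-N-s}$ near the origin in $\mathbb{R}^N$ (valid for all $s<2$) is what ultimately makes the argument work, mirroring the standard estimate used to justify the singular part of the decomposition \eqref{fraklap}.
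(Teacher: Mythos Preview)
Your proposal is correct and follows essentially the same route as the paper: both use the second-order Taylor bound $|z|^2$ on the bracketed numerator (treating the two shifts separately), the compact support of $\psi$ to localize in $(x,y)$, and the integrability of $|z|^{2-N-s}$ over $\{|z|\leq r\}$. The only cosmetic difference is that the paper packages the conclusion as ``the integrand lies in $L^1(\mathbb{R}^N\times\mathbb{R}^N\times\overline{B_1(0)})$, hence the integral over $\{|z|\leq r\}$ tends to $0$ by dominated convergence'', whereas you compute the explicit rate $r^{2-s}$.
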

	\begin{proof}[Proof:]
		Without loss of generality let $r<1$. We show that the integrand belongs to $L^1(\mathbb{R}^N\times\mathbb{R}^N\times \overline{B_1(0)})$. The claim then follows by Lebesgue's dominated convergence theorem. Similar to the proof of Theorem \ref{klassischeentropieschwacheloesung} $i.)$ we get
		\begin{align*}
		&|u(x)-\tilde{u}(y)|\frac{|\psi(x+z,y)-\psi(x,y)-\nabla_x\psi(x,y)\cdot z|}{|z|^{N+s}}\\
		&\leq (\|u\|_\infty+\|\tilde{u}\|_\infty)\int\limits_0^1(1-\tau)\frac{|\mathrm{D}^2\psi(x+\tau z,y)|}{|z|^{N+s-2}} \, \mathrm{d}\tau\\
		&\leq C(\|u\|_\infty,\|\tilde{u}\|_\infty,\|\mathrm{D}^2(\psi)\|_\infty)\frac{\chi_{\supp(\psi)+\overline{B_1(0,0)}}(x,y)}{|z|^{N+s-2}}\in L^1(\mathbb{R}^N\times\mathbb{R}^N\times \overline{B_1(0)}) \, .
		\end{align*}
		An analogous calculation for the term
		\begin{align*}
		|u(x)-\tilde{u}(y)|\frac{|\psi(x,y+z)-\psi(x,y)-\nabla_y\psi(x,y)\cdot z|}{|z|^{N+s}}
		\end{align*}
		completes the proof.
	\end{proof}
	With similar calculations we can also pass to the limit with $r\to0$ in the regular term of the nonlocality. If we combine these results, we obtain, for $r\to 0$:
	\begin{align*}
	0\leq&\int\limits_{\mathbb{R}^N}\int\limits_{\mathbb{R}^N}(f(x)-\tilde{f}(y))\sign(u(x)-\tilde{u}(y))\psi(x,y)-|b(u(x))-b(\tilde{u}(y))|\psi(x,y)\\
	&+\zeta(u(x),\tilde{u}(y))\cdot(\nabla_x+\nabla_y)\psi(x,y) \, \mathrm{d}y\mathrm{d}x\\
	&+\int\limits_{\mathbb{R}^N}\int\limits_{\mathbb{R}^N}|u(x)-\tilde{u}(y)|\left(C(N,s)\int\limits_{\{|z|\geq 1\}}\frac{\psi(x+z,y+z)-\psi(x,y)}{|z|^{N+s}} \, \mathrm{d}z\right.\\
	&+\left.C(N,s)\int\limits_{\{|z|\leq 1\}}\frac{\psi(x+z,y+z)-\psi(x,y)-(\nabla_x+\nabla_y)\psi(x,y)\cdot z}{|z|^{N+s}} \, \mathrm{d}z \right) \mathrm{d}y\mathrm{d}x \, .
	\end{align*}
	We now choose $\mu>0$, $\psi(x,y)=\rho_\mu(y-x)\Phi(x)$, where $\rho_\mu\in\mathcal{D}(B_\mu(0))$ with $\rho_\mu\geq 0$ such that $\int_{\mathbb{R}^N}\rho_\mu(z) \, \mathrm{d}z=1$ and $\Phi\in\mathcal{D}(\mathbb{R}^N)$ with $\Phi\geq 0$. Then we have
	\begin{align*}
	0\leq &\int\limits_{\mathbb{R}^N}\int\limits_{\mathbb{R}^N}|f(x)-\tilde{f}(y)|\rho_\mu(y-x)\Phi(x)-|b(u(x))-b(\tilde{u}(y))|\rho_\mu(y-x)\Phi(x)\\
	&+|u(x)-\tilde{u}(y)|\rho_\mu(y-x)\Xi(x) \, \mathrm{d}y\mathrm{d}x\eqqcolon I_\mu \, ,
	\end{align*}
	where $\Xi(x)\coloneqq L|\nabla\Phi(x)|-(-\Delta)^{\frac{s}{2}}\Phi(x)$ and $L$ is the Lipschitz constant of $F$ on the set $[-m,m]$ with $m=\max\{\|u\|_\infty , \|\tilde{u}\|_\infty \}$. Letting $\mu\to0$, we can show that
	\begin{equation}
	\label{letzteungleichung} 0\leq\int\limits_{\mathbb{R}^N}|f(x)-\tilde{f}(x)|\Phi(x)-|b(u(x))-b(\tilde{u}(x))|\Phi(x)+|u(x)-\tilde{u}(x)|\Xi(x) \, \mathrm{d}x
	\end{equation}
	holds. Now, choose $\Phi\in\mathcal{D}(\mathbb{R}^N)$, $0\leq \Phi \leq 1$, such that
	\begin{align*}
	\Phi(x)=\begin{cases}
	1 \ &\text{if} \ \|x\|\leq 1\\
	0 \ &\text{if} \ \|x\|\geq 2
	\end{cases}
	\end{align*}
	Then we define $\Phi_n(x)=\Phi(\frac{x}{n})$ $\forall n\in \mathbb{N}$. It yields $D^2\Phi_n\to 0$ pointwise and locally uniformly in $\mathbb{R}^N$ for $n\to\infty$. For $\Phi=\Phi_n$ in (\ref{letzteungleichung}), we can make use of Proposition \ref{lokglm}, the growth condition of $b$, and Lebesgue's theorem of dominated convergence to get
	\begin{align}
	\label{letztergrenzuebergang}
	\nonumber
	0\leq &\lim\limits_{n\to\infty} \int\limits_{\mathbb{R}^N}|f(x)-\tilde{f}(x)|\Phi_n(x)-|b(u(x))-b(\tilde{u}(x))|\Phi_n(x)\\
	&+|u(x)-\tilde{u}(x)|(L|\nabla\Phi_n(x)|-(-\Delta)^{\frac{s}{2}}\Phi_n(x)) \, \mathrm{d}x\\
	\nonumber
	=&\int\limits_{\mathbb{R}^N}|f(x)-\tilde{f}(x)| - |b(u(x))-b(\tilde{u}(x))| \, \mathrm{d}x \, .
	\end{align}
	This completes the proof of Theorem \ref{kontraktionsprinzip}.
\end{proof}

\subsection{Extensions and Remarks}
\begin{mybemerkung}
	Similar to the proof of Theorem \ref{kontraktionsprinzip}, we can show the following $L^1$-comparison principles:
	\begin{align}
	\label{vergleichsprinzip}
	\int\limits_{\mathbb{R}^N}(b(u(x))-b(\tilde{u}(x)))^+ \, \mathrm{d}x \leq \int\limits_{\mathbb{R}^N}(f(x)-\tilde{f}(x))^+ \, \mathrm{d}x
	\end{align}
	and
	\begin{align}
	\label{vergleichsprinzip2}
	\int\limits_{\mathbb{R}^N}(b(u(x))-b(\tilde{u}(x)))^- \, \mathrm{d}x \leq \int\limits_{\mathbb{R}^N}(f(x)-\tilde{f}(x))^- \, \mathrm{d}x \, .
	\end{align}
	To prove this, we apply the method of doubling variables again, but with entropies
	\begin{align*}
	\eta_k(a)=(a-k)^+
	\end{align*}
	and
	\begin{align*}
	\eta_k(a)=(a-k)^- \, .
	\end{align*}
	respectively.
\end{mybemerkung}
\begin{mybemerkung}
	\label{bemerkung111}
	\begin{enumerate}[i.)]
		\item Let $f,\tilde{f},u,\tilde{u}\in L^\infty(\mathbb{R}^N)$ be such that $u$ satisfies the entropy inequality \eqref{entropiebedingung} of \eqref{problem} and $\tilde{u}$ of ($\mathcal{P}_{\tilde{f}}$) respectively. Then we can still prove the \glqq local\grqq \ inequality \eqref{letzteungleichung} for the contraction and comparison principle. 
		\item If $u$ is an entropy solution to \eqref{problem} with $b(u)\in L^1(\mathbb{R}^N)$ and $k>0$ then $(u-k)^+\in L^1(\mathbb{R}^N)$ and, since $\tilde{u}\equiv k$ is a classical solution to $(\mathcal{P}_{\tilde{f}})$ with $\tilde{f}\equiv b(k)$, we can also pass to the limit in \eqref{letztergrenzuebergang} for the comparison principle, to show that \eqref{vergleichsprinzip} holds.
	\end{enumerate}
\end{mybemerkung}
We are now able to prove the following $L^\infty$-estimate.
\begin{mylemma}
	Let $f\in L^1(\mathbb{R}^N)\cap L^\infty(\mathbb{R}^N)$ and $u$ be an entropy solution to \eqref{problem}, then we have
	\begin{align}
	\label{lunendlichabschaetzung}
	\|b(u)\|_\infty\leq\|f\|_\infty \, .
	\end{align}
\end{mylemma}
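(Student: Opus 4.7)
The plan is to apply the comparison principles \eqref{vergleichsprinzip} and \eqref{vergleichsprinzip2} to $u$ and a suitable constant function $\tilde u \equiv k$. A constant is a classical solution of $(\mathcal{P}_{b(k)})$ since $\divergence F(k) = 0$ and $(-\Delta)^{s/2}k = 0$ pointwise, and hence by Proposition \ref{klassischeentropieschwacheloesung} also an entropy solution with data $\tilde f \equiv b(k)$.

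For the upper bound I exploit the growth condition $b(a)a \geq \lambda a^2$: for $a \geq 0$ this gives $b(a) \geq \lambda a$, so the continuous nondecreasing function $b$ is unbounded on $[0,\infty)$ with $b(0)=0$, and therefore attains the value $\|f\|_\infty$ at some $k \geq 0$. By Remark \ref{bemerkung111}(ii), the comparison inequality \eqref{vergleichsprinzip} applies in this situation and yields
\[
\int_{\mathbb{R}^N} (b(u(x)) - \|f\|_\infty)^+ \, \mathrm{d}x \leq \int_{\mathbb{R}^N} (f(x) - \|f\|_\infty)^+ \, \mathrm{d}x = 0,
\]
so that $b(u) \leq \|f\|_\infty$ almost everywhere. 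A symmetric argument with a nonpositive $k$ satisfying $b(k) = -\|f\|_\infty$, combined with \eqref{vergleichsprinzip2}, gives $b(u) \geq -\|f\|_\infty$ almost everywhere, and the two bounds together establish \eqref{lunendlichabschaetzung}.

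The only delicate point is that Remark \ref{bemerkung111}(ii) is formulated for $k > 0$, so one must verify its proof carries over to the lower bound. This amounts to checking that $(u-k)^- \in L^1(\mathbb{R}^N)$ and that the limit passage analogous to \eqref{letztergrenzuebergang} remains valid. The first step follows from the two-sided linear control $|b(a)| \geq \lambda|a|$ implied by the growth hypothesis: on $\{u<k<0\}$ one has $(k-u)^+ \leq |u| \leq |b(u)|/\lambda$, so the integrability of $b(u)$ transfers to $(u-k)^-$. With this in hand the rest of the cutoff argument used to derive \eqref{vergleichsprinzip2} from \eqref{letzteungleichung} is unchanged.
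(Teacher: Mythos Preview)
Your proof is correct and follows essentially the same approach as the paper: both compare $u$ against a constant $\tilde u\equiv k$ chosen via surjectivity of $b$ (guaranteed by the growth condition) and invoke Remark~\ref{bemerkung111}(ii) together with the comparison principles \eqref{vergleichsprinzip}, \eqref{vergleichsprinzip2}. The paper uses the slightly sharper constants $\|f^+\|_\infty$ and $\|f^-\|_\infty$ in place of $\|f\|_\infty$ and simply says ``analogously'' for the lower bound, whereas you take the extra care to verify that $(u-k)^-\in L^1(\mathbb{R}^N)$ for $k<0$ so that the cutoff argument behind \eqref{letztergrenzuebergang} still goes through.
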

\begin{proof}[Proof:]
	Since $b$ satisfies the growth condition, $b$ is surjective, i.e., for $\|f^+\|_\infty\in\mathbb{R}$ there exists $c\in\mathbb{R}, \, c\geq0$, such that $b(c)=\|f^+\|_\infty$. Let $\tilde{f}\equiv\|f^+\|_\infty$ and $\tilde{u}\equiv c$, then it follows that $b(u)\leq \|f^+\|_\infty$ almost everywhere with Remark \ref{bemerkung111} ii.). Analogously we can show that $-\|f^-\|_\infty\leq b(u)$ almost everywhere which completes the proof.
\end{proof}

\section{Existence of entropy solutions}
		\subsection{The vanishing viscosity method}
We are now turning our attention to the existence proof for entropy solutions to problem \eqref{problem}. For this, consider the following modified problems for $\varepsilon>0$:
\begin{align}
\label{zup}
\tag{$\tilde{\mathcal{P}}_f^{\varepsilon}$}
b(u)+\varepsilon u-\varepsilon\Delta u+(-\Delta)^\frac{s}{2}u=f
\end{align}
with $f\in L^2(\mathbb{R}^N)$. Assume $u\in C_b^2(\mathbb{R}^N)\cap H^1(\mathbb{R}^N)$ to be a classical solution to \eqref{zup}. We obtain a weak formulation of \eqref{zup} by multiplication with a test function $\varphi\in\mathcal{D}(\mathbb{R}^N)$ and subsequent integration over $\mathbb{R}^N$.
\begin{mydefinition}
	Let $f\in L^2(\mathbb{R}^N)$. A function $u\in H^1(\mathbb{R}^N)$ is called weak solution to \eqref{zup}, if
	\begin{align*}
	&\int\limits_{\mathbb{R}^N}b(u(x))\varphi(x) \, \mathrm{d}x + \varepsilon \int\limits_{\mathbb{R}^N}u(x)\varphi(x) \, \mathrm{d}x + \varepsilon\int\limits_{\mathbb{R}^N} \nabla u(x)\nabla \varphi(x) \, \mathrm{d}x\\ &+\frac{C(N,s)}{2}\int\limits_{\mathbb{R}^N}\int\limits_{\mathbb{R}^N}\frac{(u(x)-u(y))(\varphi(x)-\varphi(y))}{|x-y|^{N+s}} \, \mathrm{d}x\mathrm{d}y = \int\limits_{\mathbb{R}^N}f(x)\varphi(x) \, \mathrm{d}x
	\end{align*}
	for all $\varphi\in H^1(\mathbb{R}^N)$.
\end{mydefinition}
\begin{mybemerkung}
	Since $H^1(\mathbb{R}^N)\hookrightarrow H^{\frac{s}{2}}(\mathbb{R}^N)$ (see \cite{D} Corollaire 4.34 (ii)), all integrals are well-defined.
\end{mybemerkung}
First, we want to prove the existence and uniqueness of weak solutions to the modified problem \eqref{zup}, with the help of Zarantonello's theorem (see \cite{E}, Theorem 3.5.2). By a fixed-point argument, we will then prove that there exist also weak solutions to the modified, doubly nonlinear problem which we will define later on. These solutions turn out to be of higher regularity and are therefore entropy solutions. Finally, we will show that the weak solutions to the modified, doubly nonlinear problems converge to the entropy solution of problem \eqref{problem}.
\begin{myproposition}
	\label{lax}
	For all $f\in L^2(\mathbb{R}^N)$ there exists a unique weak solution $u\in H^1(\mathbb{R}^N)$ to \eqref{zup}.
\end{myproposition}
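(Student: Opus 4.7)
The plan is to recast the weak formulation as an operator equation on the Hilbert space $H^1(\mathbb{R}^N)$ and verify the hypotheses of Zarantonello's theorem, namely Lipschitz continuity and strong monotonicity. To this end, for $u \in H^1(\mathbb{R}^N)$ I would define the functional
\begin{align*}
\langle \mathcal{A}u, \varphi \rangle := &\int\limits_{\mathbb{R}^N} b(u)\varphi \, \mathrm{d}x + \varepsilon\int\limits_{\mathbb{R}^N} u\varphi \, \mathrm{d}x + \varepsilon\int\limits_{\mathbb{R}^N} \nabla u \cdot \nabla \varphi \, \mathrm{d}x \\
&+ \frac{C(N,s)}{2}\int\limits_{\mathbb{R}^N}\int\limits_{\mathbb{R}^N}\frac{(u(x)-u(y))(\varphi(x)-\varphi(y))}{|x-y|^{N+s}} \, \mathrm{d}x\mathrm{d}y
\end{align*}
and show it extends to an element $\mathcal{A}u \in H^1(\mathbb{R}^N)^*$. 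Continuity of the four terms in $\varphi$ follows from Cauchy-Schwarz together with $b(u) \in L^2(\mathbb{R}^N)$ (a consequence of $b$ being Lipschitz with $b(0)=0$) and the continuity of the Gagliardo form on $H^{s/2}(\mathbb{R}^N)\times H^{s/2}(\mathbb{R}^N)$, combined with the embedding $H^1(\mathbb{R}^N)\hookrightarrow H^{s/2}(\mathbb{R}^N)$ recalled in the remark above. Identifying $H^1(\mathbb{R}^N)^*$ with $H^1(\mathbb{R}^N)$ via the Riesz isomorphism and noting that $f\in L^2(\mathbb{R}^N)$ induces a bounded linear functional on $H^1(\mathbb{R}^N)$, the weak formulation becomes $\mathcal{A}u = Rf$ in $H^1(\mathbb{R}^N)$.

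Next I would verify Lipschitz continuity of $\mathcal{A}$. For $u_1,u_2\in H^1(\mathbb{R}^N)$ and $v = u_1-u_2$, the $b$-term contributes at most $L_b\|v\|_{L^2}\|\varphi\|_{L^2}$ by Lipschitz continuity of $b$, the two viscosity terms are trivially bounded by $\varepsilon\|v\|_{H^1}\|\varphi\|_{H^1}$, and the Gagliardo contribution is bounded by $C\|v\|_{H^{s/2}}\|\varphi\|_{H^{s/2}} \leq C'\|v\|_{H^1}\|\varphi\|_{H^1}$. Summing yields $\|\mathcal{A}u_1 - \mathcal{A}u_2\|_{H^{-1}} \leq C(L_b,\varepsilon,N,s)\|v\|_{H^1}$. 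For strong monotonicity, I would test $\mathcal{A}u_1 - \mathcal{A}u_2$ against $v$ itself: the term $(b(u_1)-b(u_2))(u_1-u_2)$ is pointwise nonnegative since $b$ is nondecreasing, the Gagliardo form evaluated at $(v,v)$ is manifestly nonnegative, and the two viscosity contributions combine to give exactly $\varepsilon(\|v\|_{L^2}^2 + \|\nabla v\|_{L^2}^2) = \varepsilon\|v\|_{H^1}^2$. Hence
\begin{align*}
\langle \mathcal{A}u_1 - \mathcal{A}u_2, u_1-u_2\rangle \geq \varepsilon\|u_1-u_2\|_{H^1}^2,
\end{align*}
so $\mathcal{A}$ is $\varepsilon$-strongly monotone.

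Having verified both hypotheses, Zarantonello's theorem (as stated in \cite{E}, Theorem 3.5.2) yields a unique $u\in H^1(\mathbb{R}^N)$ with $\mathcal{A}u = Rf$, which is the desired weak solution. I do not expect any deep obstacle in this argument: all of the work sits in the nonlocal term, where one only needs the boundedness of the Gagliardo form together with the embedding $H^1\hookrightarrow H^{s/2}$; the nonlinear $b$-term contributes positively to monotonicity but the crucial coercivity constant $\varepsilon$ comes entirely from the added viscosity $\varepsilon u - \varepsilon\Delta u$, which is precisely why the regularization was introduced.
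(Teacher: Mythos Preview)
Your proposal is correct and follows essentially the same route as the paper's proof: define the nonlinear operator $\mathcal{A}:H^1(\mathbb{R}^N)\to H^{-1}(\mathbb{R}^N)$ from the weak formulation, verify Lipschitz continuity (via the Lipschitz constant of $b$, the embedding $H^1\hookrightarrow H^{s/2}$ for the Gagliardo term, and trivial bounds for the viscosity terms) and strong monotonicity with constant $\varepsilon$ (using that the $b$-term and the Gagliardo form are nonnegative, so the coercivity comes entirely from $\varepsilon u-\varepsilon\Delta u$), then invoke Zarantonello's theorem. The only cosmetic difference is that you pass through the Riesz identification of $H^1$ with its dual, whereas the paper works directly in $H^{-1}$.
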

\begin{proof}[Proof:]
	We define
	\begin{align*}
	a:H^1(\mathbb{R}^N)\times H^1(\mathbb{R}^N)&\to\mathbb{R}\\
	(u,v)&\mapsto \int\limits_{\mathbb{R}^N}b(u(x))v(x) \, \mathrm{d}x + \varepsilon\int\limits_{\mathbb{R}^N}u(x)v(x) \, \mathrm{d}x + \varepsilon\int\limits_{\mathbb{R}^N}\nabla u(x)\cdot\nabla v(x) \, \mathrm{d}x\\
	&+ \frac{C(N,s)}{2}\int\limits_{\mathbb{R}^N}\int\limits_{\mathbb{R}^N}\frac{(u(x)-u(y))(v(x)-v(y))}{|x-y|^{N+s}} \, \mathrm{d}x\mathrm{d}y \, .
	\end{align*}
	Then $a(u,\cdot)$ is linear and bounded for every fixed $u\in H^1(\mathbb{R}^N)$, i.e., it is an element in $H^{-1}(\mathbb{R}^N)$. Consider now
	\begin{align*}
	\mathcal{A}:H^1(\mathbb{R}^N)&\to H^{-1}(\mathbb{R}^N)\\
	u&\mapsto a(u,\cdot)
	\end{align*}
	and we claim that $\mathcal{A}$ is Lipschitz continuous and strongly monotone. It then follows by the Theorem of Zarantonello that $\mathcal{A}$ is bijective and therefore, for all $f\in L^2(\mathbb{R}^N)\hookrightarrow H^{-1}(\mathbb{R}^N)$, there exists a unique $u\in H^1(\mathbb{R}^N)$ such that
	\begin{align*}
	a(u,v)=<f,v>_{H^{-1},H^1}=\int\limits_{\mathbb{R}^N}f(x)v(x) \, \mathrm{d}x
	\end{align*}
	for all $v\in H^1(\mathbb{R}^N)$. This completes the proof.\\
	Let $u_1,u_2\in H^1(\mathbb{R}^N)$. Then it holds
	\begin{align*}
	&\|\mathcal{A}u_1-\mathcal{A}u_2\|_{H^{-1}}\\
	=&\sup\limits_{\|v\|_{H^1}\leq 1} |<\mathcal{A}u_1-\mathcal{A}u_2,v>_{H^{-1},H^1}|\\
	\leq&\sup\limits_{\|v\|_{H^1}\leq 1}\int\limits_{\mathbb{R}^N}|b(u_1(x))-b(u_2(x))||v(x)| \, \mathrm{d}x\\
	&+ \varepsilon\int\limits_{\mathbb{R}^N}|u_1(x)-u_2(x)||v(x)| \, \mathrm{d}x + \varepsilon\int\limits_{\mathbb{R}^N}|\nabla u_1(x)-\nabla u_2(x)||\nabla v(x)| \, \mathrm{d}x\\
	&+ \frac{C(N,s)}{2}\int\limits_{\mathbb{R}^N}\int\limits_{\mathbb{R}^N}\frac{|(u_1(x)-u_2(x))-(u_1(y)-u_2(y))||v(x)-v(y)|}{|x-y|^{N+s}} \, \mathrm{d}x\mathrm{d}y \\
	\leq&\sup\limits_{\|v\|_{H^1}\leq 1}L_b\|u_1-u_2\|_{L^2}\|v\|_{L^2}+\varepsilon\|u_1-u_2\|_{L^2}\|v\|_{L^2}+\varepsilon\|\nabla u_1-\nabla u_2\|_{L^2}\|\nabla v\|_{L^2}\\
	&+\frac{C(N,s)}{2}[u_1-u_2]_{s/2}[v]_{s/2}\\
	\leq&C(N,s,\varepsilon)\|u_1-u_2\|_{H^1} \, ,
	\end{align*}
	where $L_b$ is the Lipschitz constant of $b$ and therefore, $\mathcal{A}$ is Lipschitz continuous. Moreover,
	\begin{align*}
	&<\mathcal{A}u_1-\mathcal{A}u_2,u_1-u_2>_{H^{-1},H^1}\\
	=&\int\limits_{\mathbb{R}^N}(b(u_1(x))-b(u_2(x)))(u_1(x)-u_2(x)) \, \mathrm{d}x\\
	&+ \varepsilon\|u_1-u_2\|_{L^2}^2 + \varepsilon\|\nabla u_1-\nabla u_2\|_{L^2}^2+\frac{C(N,s)}{2}[u_1-u_2]_{s/2}^2\\
	\geq&\varepsilon\|u_1-u_2\|_{H^1}^2 \, ,
	\end{align*}
	since $b$ is nondecreasing, i.e. $\mathcal{A}$ is also strongly monotone.
\end{proof}
\begin{mylemma}
	\label{soboleveinbettung}
	Let $f\in L^2(\mathbb{R}^N)$. For the unique weak solution $u\in H^1(\mathbb{R}^N)$ to \eqref{zup} it holds
	\begin{align*}
	u\in H_{loc}^2(\mathbb{R}^N) \, .
	\end{align*}
\end{mylemma}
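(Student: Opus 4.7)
The plan is to rewrite the weak formulation of (\ref{zup}) in the distributional form $-\varepsilon\Delta u = g$ with $g\in L^2(\mathbb{R}^N)$, and then to invoke standard interior regularity for the Laplacian. The only nontrivial ingredient is identifying the nonlocal bilinear term as an honest $L^2$-pairing with $(-\Delta)^{s/2}u$.

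First I would reinterpret the nonlocal term. Since $u\in H^1(\mathbb{R}^N)\hookrightarrow H^{s/2}(\mathbb{R}^N)$ and $s\in(0,1]$, Plancherel gives
\begin{align*}
\|(-\Delta)^{s/2}u\|_{L^2}^2=\int_{\mathbb{R}^N}|\xi|^{2s}|\hat u(\xi)|^2\,\mathrm{d}\xi\leq \int_{\mathbb{R}^N}(1+|\xi|^2)|\hat u(\xi)|^2\,\mathrm{d}\xi=\|u\|_{H^1}^2,
\end{align*}
where the middle inequality uses $|\xi|^{2s}\leq 1+|\xi|^2$, valid precisely because $s\leq 1$. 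Hence $(-\Delta)^{s/2}u\in L^2(\mathbb{R}^N)$. By the symmetry formula (\ref{symprop}), extended by density from $\mathcal{S}(\mathbb{R}^N)$ to $H^{s/2}(\mathbb{R}^N)$, the Gagliardo bilinear form in the weak formulation coincides with $\int_{\mathbb{R}^N}(-\Delta)^{s/2}u\cdot\varphi\,\mathrm{d}x$ for every $\varphi\in\mathcal{D}(\mathbb{R}^N)$.

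Second, I would set $g:=f-b(u)-\varepsilon u-(-\Delta)^{s/2}u$ and check that $g\in L^2(\mathbb{R}^N)$: $f\in L^2$ by hypothesis; $b(u)\in L^2$ because $|b(u)|\leq L_b|u|$ and $u\in L^2$; $\varepsilon u\in L^2$ trivially; and $(-\Delta)^{s/2}u\in L^2$ by Step 1. Substituting into the weak equation yields
\begin{align*}
\varepsilon\int_{\mathbb{R}^N}\nabla u\cdot\nabla\varphi\,\mathrm{d}x=\int_{\mathbb{R}^N}g\,\varphi\,\mathrm{d}x\qquad \forall\,\varphi\in\mathcal{D}(\mathbb{R}^N),
\end{align*}
i.e.\ $-\varepsilon\Delta u=g$ in $\mathcal{D}'(\mathbb{R}^N)$ with right-hand side in $L^2(\mathbb{R}^N)$.

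Finally, interior regularity for the Laplacian finishes the proof: for a cut-off $\eta\in\mathcal{D}(\mathbb{R}^N)$ with $\eta\equiv 1$ on a ball $B_R$, applying Nirenberg's method of difference quotients to the equation $-\Delta u=g/\varepsilon\in L^2(\mathbb{R}^N)$ gives a uniform $L^2$-bound on the difference quotients of $\nabla u$ in $B_R$, hence $u\in H^2(B_R)$; since $R$ is arbitrary, $u\in H^2_{loc}(\mathbb{R}^N)$. (On the whole space one can in fact argue by Plancherel that $u\in H^2(\mathbb{R}^N)$ globally, but the local statement is all that is claimed.) The only step that is genuinely delicate is the $L^2$-bound on $(-\Delta)^{s/2}u$ in Step 1, which crucially uses $s\leq 1$; everything else is standard bookkeeping.
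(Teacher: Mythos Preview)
Your proof is correct and follows essentially the same route as the paper: both show $(-\Delta)^{s/2}u\in L^2(\mathbb{R}^N)$ from $u\in H^1(\mathbb{R}^N)$ (the paper cites \cite{HG}, Proposition 3.6, whereas you spell out the Plancherel estimate), then move all $L^2$ terms to the right-hand side to obtain $-\varepsilon\Delta u\in L^2(\mathbb{R}^N)$ in the distributional sense, and finally invoke classical interior regularity for the Laplacian. Your additional remark that Plancherel would even yield global $H^2$-regularity is a nice bonus observation not made in the paper.
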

\begin{proof}
	In general, for $0<s\leq1$, it is known that (see \cite{HG}, Proposition 3.6)
	\begin{align*}
	u\in H^1(\mathbb{R}^N) \Rightarrow (-\Delta)^{\frac{s}{2}}u \in L^2(\mathbb{R}^N) \, .
	\end{align*}
	As a consequence, for the unique weak solution to \eqref{zup} $u\in H^1(\mathbb{R}^N)$, it follows that
	\begin{align*}
	\varepsilon u - \varepsilon \Delta u = f - (-\Delta)^\frac{s}{2}u - b(u)\in L^2(\mathbb{R}^N)
	\end{align*}
	in the distributional sense, hence $\Delta u\in L^2(\mathbb{R}^N)$ and from classical regularity results for the Laplacian (see e.g. \cite{EV}, Chapter 6.3.1) it follows that
	\begin{align*}
	u\in H_{\text{loc}}^2(\mathbb{R}^N) \, .
	\end{align*}
\end{proof}
\begin{mycorollary}
	\label{corollary}
	From Lemma \ref{soboleveinbettung} it follows that the weak solution $u$ to \eqref{zup} satisfies the equation pointwise almost everywhere.
\end{mycorollary}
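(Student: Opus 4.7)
The goal is to show that the weak solution $u$ satisfies
\begin{equation*}
b(u(x)) + \varepsilon u(x) - \varepsilon \Delta u(x) + (-\Delta)^{s/2} u(x) = f(x) \quad \text{for a.e.\ } x \in \mathbb{R}^N.
\end{equation*}
My plan is to start from the weak formulation tested against an arbitrary $\varphi \in \mathcal{D}(\mathbb{R}^N)$ and rewrite each of the differential terms so that all operators act on $u$ rather than on $\varphi$. Everything will then follow from the fundamental lemma of the calculus of variations.

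First, since by Lemma \ref{soboleveinbettung} we have $u \in H^2_{\mathrm{loc}}(\mathbb{R}^N)$ and $\varphi$ has compact support, the classical integration-by-parts formula applies on a bounded open set containing $\supp(\varphi)$, giving
\begin{equation*}
\varepsilon \int_{\mathbb{R}^N} \nabla u(x) \cdot \nabla \varphi(x) \, \mathrm{d}x = -\varepsilon \int_{\mathbb{R}^N} \Delta u(x) \, \varphi(x) \, \mathrm{d}x.
\end{equation*}
Next I would treat the nonlocal term. As recalled in the proof of Lemma \ref{soboleveinbettung}, from $u \in H^1(\mathbb{R}^N) \hookrightarrow H^{s/2}(\mathbb{R}^N)$ we have $(-\Delta)^{s/2} u \in L^2(\mathbb{R}^N)$, so the right-hand side of the identity
\begin{equation*}
\frac{C(N,s)}{2}\int_{\mathbb{R}^N}\int_{\mathbb{R}^N}\frac{(u(x)-u(y))(\varphi(x)-\varphi(y))}{|x-y|^{N+s}}\,\mathrm{d}x\mathrm{d}y \;=\; \int_{\mathbb{R}^N}(-\Delta)^{s/2}u(x)\,\varphi(x)\,\mathrm{d}x
\end{equation*}
makes sense. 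This identity holds by \eqref{symprop} for Schwartz functions and extends to our setting by a standard density argument: approximating $u$ in $H^{s/2}(\mathbb{R}^N)$ by a sequence in $\mathcal{S}(\mathbb{R}^N)$, the left-hand side is continuous by the Cauchy--Schwarz inequality on the bilinear form defined on $H^{s/2}(\mathbb{R}^N) \times H^{s/2}(\mathbb{R}^N)$ (see the remark following Proposition \ref{splitprop}), and the right-hand side is continuous because $(-\Delta)^{s/2}$ is bounded from $H^{s/2}(\mathbb{R}^N)$ to $H^{-s/2}(\mathbb{R}^N)$ (equivalently via the Fourier symbol $|\xi|^s$).

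Substituting both reformulations into the weak formulation yields
\begin{equation*}
\int_{\mathbb{R}^N}\bigl[\, b(u(x)) + \varepsilon u(x) - \varepsilon \Delta u(x) + (-\Delta)^{s/2} u(x) - f(x)\,\bigr]\varphi(x)\,\mathrm{d}x = 0
\end{equation*}
for every $\varphi \in \mathcal{D}(\mathbb{R}^N)$. Since the bracketed expression lies in $L^1_{\mathrm{loc}}(\mathbb{R}^N)$ (each summand does, using $b$ Lipschitz and the memberships established above), the du Bois-Reymond lemma forces it to vanish almost everywhere, which is the claim. The only delicate point is the passage from the symmetric Gagliardo-type bilinear form to the pointwise action of $(-\Delta)^{s/2}$ for $u$ of limited regularity; this is the one step that requires more than a direct computation, but it is handled by the density/Fourier argument sketched above.
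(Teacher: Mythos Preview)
Your argument is correct and supplies exactly the details that the paper omits: the corollary is stated without proof in the paper, being treated as an immediate consequence of Lemma~\ref{soboleveinbettung}. Your route---integrate by parts in the Laplacian term using $u\in H^2_{\mathrm{loc}}(\mathbb{R}^N)$, pass from the Gagliardo bilinear form to $\int(-\Delta)^{s/2}u\,\varphi$ by density using $u\in H^{s/2}(\mathbb{R}^N)$ and $(-\Delta)^{s/2}u\in L^2(\mathbb{R}^N)$, then apply the fundamental lemma---is precisely the standard justification the authors have in mind.
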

Now, we also want to consider the nonlinearity $\divergence F(u)$ in our equation. Since we can not show the existence of weak solutions with such a term directly, we need to look at approximating problems. To this end, for every $R>0$, let $\varrho_R\in C^\infty(\mathbb{R}^N)$ be equipped with the following properties:
\begin{align*}
0&\leq \varrho_R\leq 1 \, ,\\
\varrho_R&=1 \ \text{on} \ B_R(0) \, ,\\
\supp \varrho_R&\Subset B_{2R}(0) \, ,\\
\|\nabla\varrho_R\|_\infty &\leq C \, .
\end{align*}
for a constant $C>0$, independent of $R$. Let us define
\begin{align*}
H^1(\mathbb{R}^N)&\to\mathbb{R}\\
v&\mapsto \int\limits_{\mathbb{R}^N}f(x)v(x) \, \mathrm{d}x + \int\limits_{\mathbb{R}^N} F_\varepsilon(\varrho_R(x)w(x))\nabla(\varrho_R(x)v(x)) \, \mathrm{d}x \, ,
\end{align*}
where $F_\varepsilon=F\circ T_{1/\varepsilon}$ with $T_{1/\varepsilon}$ the truncation function at level $1/\varepsilon$ and $w\in H^1(\mathbb{R}^N)$ fixed, but arbitrary. This map is linear and continuous, i.e. an element of $H^{-1}(\mathbb{R}^N)$.  If we consider the family of integral equations
\begin{align}
\label{huii}
\nonumber
&\int\limits_{\mathbb{R}^N} b(u(x))v(x) \, \mathrm{d}x + \varepsilon \int\limits_{\mathbb{R}^N} u(x)v(x) \, \mathrm{d}x + \varepsilon\int\limits_{\mathbb{R}^N}\nabla u(x) \nabla v(x) \, \mathrm{d}x\\
+&\frac{C(N,s)}{2}\int\limits_{\mathbb{R}^N}\int\limits_{\mathbb{R}^N}\frac{(u(x)-u(y))(v(x)-v(y))}{|x-y|^{N+s}} \, \mathrm{d}x\mathrm{d}y\\
\nonumber
=&\int\limits_{\mathbb{R}^N}f(x)v(x) \, \mathrm{d}x + \int\limits_{\mathbb{R}^N} F_\varepsilon(\varrho_R(x)w(x))\nabla(\varrho_R(x)v(x)) \, \mathrm{d}x
\end{align}
for all $v\in H^1(\mathbb{R}^N)$, we already know from Theorem \ref{lax} that there exists a unique $u=u_{R,w}\in H^1(\mathbb{R}^N)$ which satisfies \eqref{huii} for all $v\in H^1(\mathbb{R}^N)$ and thus, we can define the following map:
\begin{align*}
\Psi_R:H^1(\mathbb{R}^N)&\to H^1(\mathbb{R}^N)\\
w&\mapsto \text{the unique solution} \ u\in H^1(\mathbb{R}^N) \ \text{to} \ \eqref{huii} \, .
\end{align*}
If we take $u$ itself as a test function in \eqref{huii} and exploit the properties of $\varrho_R$, we get the following a-priori estimate:
\begin{align*}
\varepsilon\|u\|_{H^1}^2&\leq \|f\|_{L^2}\|u\|_{L^2}+C_{\varepsilon,R}\|u\|_{H^1}\\
\Rightarrow \|u\|_{H^1}&\leq\frac{\|f\|_{L^2}+C_{\varepsilon,R}}{\varepsilon}\eqqcolon K_{\varepsilon,R} \, .
\end{align*}
Now, consider the set $M=\{u\in H^1(\mathbb{R}^N) : \|u\|_{H^1}\leq K_{\varepsilon,R}\}$ which is nonempty, bounded, closed and convex. If we restrict $\Psi_R$ to the set $M$ and prove that there exists a fixed-point, we obtain the existence of a solution $u$ to \eqref{huii} with $w=u$. To achieve this, we have to prove that $\Psi_R$ is weakly sequentially continuous, for the claim then follows by the fixed-point theorem of Schauder-Tikhonov (see \cite{Z}, Corollary 9.7).\\
\begin{mylemma}
	The map $\Psi_R:M\to M$ is weakly sequentially continuous.
\end{mylemma}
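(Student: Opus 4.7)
Take a sequence $(w_n)_n \subset M$ with $w_n \rightharpoonup w$ in $H^1(\mathbb{R}^N)$; the limit $w$ lies in $M$ because $M$ is closed and convex, hence weakly closed. Set $u_n \coloneqq \Psi_R(w_n)$; the a-priori estimate preceding the definition of $\Psi_R$ gives $u_n \in M$, so $(u_n)_n$ is bounded in $H^1(\mathbb{R}^N)$. To prove $u_n \rightharpoonup \Psi_R(w)$ in $H^1(\mathbb{R}^N)$, it suffices, by the usual subsequence argument, to extract an arbitrary weakly convergent subsequence $u_{n_k} \rightharpoonup \hat{u}$ and to identify $\hat{u}$ with $\Psi_R(w)$. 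After a further extraction, I may assume $u_{n_k}\to\hat{u}$ and $w_{n_k}\to w$ strongly in $L^2_{\mathrm{loc}}(\mathbb{R}^N)$ and pointwise a.e., by the Rellich--Kondrachov theorem.

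The next step is to pass to the limit in each term of \eqref{huii}, tested against a fixed $v \in H^1(\mathbb{R}^N)$, with $w=w_{n_k}$ and $u=u_{n_k}$. The $\varepsilon$-linear terms $\varepsilon\int u_{n_k}v$ and $\varepsilon\int \nabla u_{n_k}\cdot\nabla v$, together with the Gagliardo bilinear form, are all continuous in the weak $H^1$-topology (using the continuous embedding $H^1(\mathbb{R}^N)\hookrightarrow H^{s/2}(\mathbb{R}^N)$ for the nonlocal term), so they pass to the limit immediately. For $\int_{\mathbb{R}^N} b(u_{n_k})v\,\mathrm{d}x$, the Lipschitz continuity of $b$ gives $\|b(u_{n_k})\|_{L^2}\leq L_b\|u_{n_k}\|_{L^2}$ together with pointwise a.e.\ convergence $b(u_{n_k})\to b(\hat{u})$, and the standard argument (bounded in $L^2$ plus a.e.\ convergence) yields $b(u_{n_k})\rightharpoonup b(\hat{u})$ in $L^2(\mathbb{R}^N)$, so the limit passage is fine.

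The decisive step, and the reason for the construction of $\Psi_R$, is the right-hand side. The cutoff $\varrho_R$ restricts everything to $\overline{B_{2R}(0)}$, where Rellich--Kondrachov provides strong $L^2$-convergence $\varrho_R w_{n_k} \to \varrho_R w$ in $L^2(\mathbb{R}^N)$. Since the truncation $T_{1/\varepsilon}$ makes $F_\varepsilon=F\circ T_{1/\varepsilon}$ bounded and globally Lipschitz on $\mathbb{R}$, we obtain $F_\varepsilon(\varrho_R w_{n_k}) \to F_\varepsilon(\varrho_R w)$ strongly in $L^2(\mathbb{R}^N)$. Since $\nabla(\varrho_R v)\in L^2(\mathbb{R}^N)$, this gives $\int F_\varepsilon(\varrho_R w_{n_k})\nabla(\varrho_R v)\,\mathrm{d}x \to \int F_\varepsilon(\varrho_R w)\nabla(\varrho_R v)\,\mathrm{d}x$ for every $v\in H^1(\mathbb{R}^N)$.

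Combining all limits, $\hat{u}$ satisfies \eqref{huii} with $w$ on the right-hand side, hence $\hat{u}=\Psi_R(w)$ by the uniqueness part of Proposition \ref{lax}. Every weak cluster point of $(u_n)_n$ in $H^1(\mathbb{R}^N)$ equals $\Psi_R(w)$, so the bounded sequence $(u_n)_n$ itself converges weakly to $\Psi_R(w)$, which is the claim. The main obstacle is the nonlinear convection term: without both the compact support provided by $\varrho_R$ (for Rellich compactness) and the boundedness provided by $T_{1/\varepsilon}$ (which makes $F_\varepsilon$ globally Lipschitz and bounded), the strong $L^2$-convergence of $F_\varepsilon(\varrho_R w_{n_k})$ would fail and the limit passage in the right-hand side would break down.
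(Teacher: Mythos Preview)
Your proof is correct and follows essentially the same route as the paper: extract a weakly convergent subsequence of $(u_n)_n$, upgrade to local strong $L^2$ and a.e.\ convergence via Rellich--Kondrachov for both $(u_{n_k})$ and $(w_{n_k})$, pass to the limit term-by-term in \eqref{huii}, and invoke the uniqueness from Proposition~\ref{lax} to identify the limit as $\Psi_R(w)$. The paper is considerably more terse at the limit passage (it simply says one can pass to the limit ``thanks to \eqref{blab1}--\eqref{blab3}''), whereas you spell out the mechanism for each term, in particular the use of the cutoff $\varrho_R$ and the truncation $T_{1/\varepsilon}$ to obtain strong $L^2$-convergence in the convection term; this added detail is accurate and welcome.
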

\begin{proof}[Proof:]
	For all $n\in\mathbb{N}$, let $w_n,w\in H^1(\mathbb{R}^N)$ such that $w_n\rightharpoonup w$ in $H^1(\mathbb{R}^N)$ for $n\to\infty$. Since the sequence of solutions $u_n=\Psi_R(w_n)$ is bounded in $H^1(\mathbb{R}^N)$, it is sufficient to prove that every weakly convergent subsequence of $(u_n)_n$ converges weakly to $\Psi_R(w)$. Now, let $(u_n)_n$ be a not relabeled subsequence such that $u_n\rightharpoonup u$ in $H^1(\mathbb{R}^N)$ for $n\to\infty$. By continuous and compact embeddings we can assume, without loss of generality, that $(w_n)_n$ and $(u_n)_n$ converge for $n\to\infty$ in the following way:
	\begin{align}
	\label{blab1}
	&w_n\longrightarrow w \ \ \ \text{in} \ L_\text{loc}^2(\mathbb{R}^N) \ \text{and almost everywhere in} \ \mathbb{R}^N \, ,\\
	\label{blab2}
	&u_n\rightharpoonup u \ \ \ \text{in} \ H^1(\mathbb{R}^N) \hookrightarrow H^\frac{s}{2}(\mathbb{R}^N) \, ,\\
	\label{blab3}
	&u_n\longrightarrow u \ \ \ \text{in} \ L_\text{loc}^2(\mathbb{R}^N) \ \text{and almost everywhere in} \ \mathbb{R}^N \, .
	\end{align}
	Since $u_n$ is a solution to \eqref{huii} with $w=w_n$, for all $n\in\mathbb{N}$ and all $\varphi\in H^1(\mathbb{R}^N)$, we have
	\begin{align*}
	&\int\limits_{\mathbb{R}^N}b(u_n)(x)\varphi(x) \, \mathrm{d}x + \varepsilon\int\limits_{\mathbb{R}^N}u_n(x)\varphi(x) \, \mathrm{d}x + \varepsilon\int\limits_{\mathbb{R}^N}\nabla u_n(x)\nabla \varphi(x) \, \mathrm{d}x\\
	+&\frac{C(N,s)}{2}\int\limits_{\mathbb{R}^N}\int\limits_{\mathbb{R}^N}\frac{(u_n(x)-u_n(y))(\varphi(x)-\varphi(y))}{|x-y|^{N+s}} \, \mathrm{d}x\mathrm{d}y\\
	&=\int\limits_{\mathbb{R}^N}f(x)\varphi(x) \, \mathrm{d}x
	+\int\limits_{\mathbb{R}^N}F_\varepsilon(\varrho_R(x)w_n)(x)\nabla (\varrho_R(x)\varphi(x)) \, \mathrm{d}x \, .
	\end{align*}
	Now we can, thanks to \eqref{blab1}-\eqref{blab3}, pass to the limit in all integrals and obtain $u=\Psi_R(w)$.
\end{proof}
We have therefore proved that, for all $R>0$, there exists $u=u_R\in H^1(\mathbb{R}^N)$ such that
\begin{equation}
\begin{split}
\label{rrr}
&\int\limits_{\mathbb{R}^N} b(u(x))v(x) \, \mathrm{d}x + \varepsilon \int\limits_{\mathbb{R}^N} u(x)v(x) \, \mathrm{d}x + \varepsilon\int\limits_{\mathbb{R}^N}\nabla u(x) \nabla v(x) \, \mathrm{d}x\\
+&\frac{C(N,s)}{2}\int\limits_{\mathbb{R}^N}\int\limits_{\mathbb{R}^N}\frac{(u(x)-u(y))(v(x)-v(y))}{|x-y|^{N+s}} \, \mathrm{d}x\mathrm{d}y\\
=&\int\limits_{\mathbb{R}^N}f(x)v(x) \, \mathrm{d}x + \int\limits_{\mathbb{R}^N} F_\varepsilon(\varrho_R(x)u(x))\nabla(\varrho_R(x)v(x)) \, \mathrm{d}x
\end{split}
\end{equation}
for all $v\in H^1(\mathbb{R}^N)$. Our next goal is to pass to the limit with $R\to\infty$. First, we need a technical result which allows us to get rid of the convection term. From the divergence theorem of Gauß we can show the following Lemma.
\begin{mylemma}
	\label{alekslemma}
	For every $u\in H^1(\mathbb{R}^N)$ it holds
	\begin{align*}
	\int\limits_{\mathbb{R}^N}F_\varepsilon(u(x))\nabla u(x) \, \mathrm{d}x = 0 \, .
	\end{align*}
\end{mylemma}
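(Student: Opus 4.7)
The plan is to recognize $F_\varepsilon(u)\cdot\nabla u$ as a pure divergence and then kill the resulting integral by exhaustion on $\mathbb{R}^N$. Introduce the componentwise antiderivative $G_\varepsilon=(G_{\varepsilon,1},\dots,G_{\varepsilon,N}):\mathbb{R}\to\mathbb{R}^N$ with
\[
G_{\varepsilon,i}(t):=\int_0^t F_{\varepsilon,i}(\tau)\,\mathrm{d}\tau,\qquad i=1,\dots,N.
\]
Because $F_\varepsilon=F\circ T_{1/\varepsilon}$ and $F$ is locally Lipschitz with $F(0)=0$, the map $F_\varepsilon$ is globally bounded and globally Lipschitz on $\mathbb{R}$, and satisfies $|F_\varepsilon(t)|\leq L_\varepsilon|t|$ with $L_\varepsilon$ the Lipschitz constant of $F$ on $[-1/\varepsilon,1/\varepsilon]$. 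Hence $|G_\varepsilon(t)|\leq\tfrac{L_\varepsilon}{2}t^2$ and $|G_\varepsilon(t)|\leq\|F_\varepsilon\|_\infty|t|$, so $u\in L^2(\mathbb{R}^N)$ yields $G_\varepsilon(u)\in L^1(\mathbb{R}^N)\cap L^2(\mathbb{R}^N)$; Cauchy--Schwarz applied to $|F_\varepsilon(u)\cdot\nabla u|\leq L_\varepsilon|u||\nabla u|$ ensures $F_\varepsilon(u)\cdot\nabla u\in L^1(\mathbb{R}^N)$, so the integral in question is well-defined.

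The Stampacchia chain rule (composition of an $H^1$-function with a Lipschitz map vanishing at zero) applies componentwise to each $G_{\varepsilon,i}$ and yields $G_{\varepsilon,i}(u)\in H^1(\mathbb{R}^N)$ with weak derivative $\partial_j G_{\varepsilon,i}(u)=F_{\varepsilon,i}(u)\,\partial_j u$. Choosing $j=i$ and summing gives the pointwise a.e.\ identity
\[
F_\varepsilon(u)\cdot\nabla u=\sum_{i=1}^N\partial_i G_{\varepsilon,i}(u)=\divergence\bigl(G_\varepsilon(u)\bigr),
\]
together with $G_\varepsilon(u)\in H^1(\mathbb{R}^N;\mathbb{R}^N)$.

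To conclude I would invoke the family of cutoffs $\varrho_R$ introduced in this section, integrate by parts in $H^1$, and pass to the limit $R\to\infty$:
\[
\int_{\mathbb{R}^N}F_\varepsilon(u)\cdot\nabla u\,\varrho_R\,\mathrm{d}x=-\int_{\mathbb{R}^N}G_\varepsilon(u)\cdot\nabla\varrho_R\,\mathrm{d}x.
\]
The left-hand side converges to $\int_{\mathbb{R}^N}F_\varepsilon(u)\cdot\nabla u\,\mathrm{d}x$ by dominated convergence, since $\varrho_R\to 1$ pointwise with $0\leq\varrho_R\leq 1$ and the integrand lies in $L^1(\mathbb{R}^N)$. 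The right-hand side is controlled by $\|\nabla\varrho_R\|_\infty\int_{\{R\leq|x|\leq 2R\}}|G_\varepsilon(u)|\,\mathrm{d}x$ and tends to zero as $R\to\infty$, because $G_\varepsilon(u)\in L^1(\mathbb{R}^N)$ and $\|\nabla\varrho_R\|_\infty$ is uniformly bounded in $R$ by construction. The main technical point is the chain rule step that turns the integrand into a pure divergence of an $L^1\cap L^2$-vector field; once this identification is in hand, the $L^1$-decay $G_\varepsilon(u)\in L^1$---a direct consequence of $F(0)=0$ and $u\in L^2(\mathbb{R}^N)$---makes the ``boundary'' contribution vanish and delivers the claim.
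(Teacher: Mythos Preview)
Your proof is correct. The underlying mechanism is the same as the paper's---the integrand is a pure divergence---but your execution differs. The paper approximates $u$ itself by a sequence $(u_n)_n\subset C_c^\infty(\mathbb{R}^N)$ with $\supp u_n\subset B_n(0)$ and $u_n\to u$ in $H^1(\mathbb{R}^N)$, invokes the Gau\ss{} theorem on each bounded ball to get $\int_{B_n(0)}F_\varepsilon(u_n)\cdot\nabla u_n\,\mathrm{d}x=0$, and then passes to the limit using the $H^1$-convergence and the Lipschitz continuity of $F_\varepsilon$. You instead keep $u$ fixed, make the divergence structure explicit through the antiderivative $G_\varepsilon$ and the Sobolev chain rule, and localize with the cutoffs $\varrho_R$ already introduced in this section. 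Your route is more self-contained (it does not quote the bounded-domain result as a black box) and makes transparent exactly which property of $F_\varepsilon$ is used where---in particular, the quadratic bound $|G_\varepsilon(t)|\le\tfrac{L_\varepsilon}{2}t^2$ from $F(0)=0$ is what gives $G_\varepsilon(u)\in L^1(\mathbb{R}^N)$ and kills the boundary term. The paper's route is shorter but hides the same computation inside the cited Gau\ss{}-theorem step for $H_0^1(\Omega)$.
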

\begin{proof}[Proof:]
	See Appendix \ref{proof2}
\end{proof}
Hence, if we apply $u=u_R\in H^1(\mathbb{R}^N)$ as a test function in \eqref{rrr} we get
\begin{align*}
\|u_R\|_{H^1}\leq \frac{\|f\|_{L^2}}{\varepsilon} \, ,
\end{align*}
for all $R>0$, where we used Lemma \ref{alekslemma}. Thus, there exists a subsequence, still denoted by $(u_R)_R$, such that
\begin{align*}
u_R&\rightharpoonup u \ \text{in} \ H^1(\mathbb{R}^N) \, ,\\
u_R&\to u \ \text{in} \ L_{loc}^2(\mathbb{R}^N) \ \text{and almost everywhere in} \ \mathbb{R}^N
\end{align*}
for $R\to\infty$. For $v\in C_c^\infty(\mathbb{R}^N)$ we can now pass to the limit with $R\to\infty$ in \eqref{rrr}: If $R$ is large enough such that $\supp v \subseteq B_R(0)$, it yields
\begin{align*}
\int\limits_{\mathbb{R}^N}F_\varepsilon(\varrho_R(x)u_R(x))\nabla(\varrho_R(x)v(x)) \, \mathrm{d}x = \int\limits_{\supp v}F_\varepsilon(u_R(x))\nabla v(x) \, \mathrm{d}x
\end{align*}
and on $\supp{v}\subseteq B_R(0)$ it holds
\begin{align*}
F_\varepsilon(\varrho_R(x)u_R(x))\nabla(\varrho_R(x)v(x)) \to F_\varepsilon(u(x))\nabla v(x)
\end{align*}
almost everywhere for $R\to\infty$ and
\begin{align*}
|F_\varepsilon(\varrho_R(x)u_R(x))\nabla(\varrho_R(x)v(x))|\leq \|F_\varepsilon\|_\infty|\nabla v(x)|\in L^1(\supp{v}) \, .
\end{align*}
Since $H^1(\mathbb{R}^N)=\overline{C_c^\infty(\mathbb{R}^N)}^{\|\cdot\|_{H^1}}$, we can pass to the limit with $R\to\infty$ in \eqref{rrr} for all $v\in H^1(\mathbb{R}^N)$. This means, we have shown the existence of a weak solution to
\begin{equation}
\label{huhu}
\tag{$\mathcal{P}_f^{\varepsilon}$}
b(u)+\varepsilon u-\varepsilon\Delta u + \divergence F_\varepsilon(u) + (-\Delta)^{\frac{s}{2}}u = f
\end{equation}
for all $\varepsilon>0$ and all $f\in L^1(\mathbb{R}^N)\cap L^\infty(\mathbb{R}^N)$.
\begin{myproposition}
	\label{l1absch}
	For the weak solution $u\in H^1(\mathbb{R}^N)$ to \eqref{huhu} the following holds:
	\begin{enumerate}[i.)]
		\label{absch}
		\item $\|b(u)\|_{L^1}\leq \|f\|_{L^1}$, $\|u\|_{L^1}\leq\frac{1}{\lambda}\|f\|_{L^1}$.
		\item $\|u\|_{L^\infty}\leq \frac{1}{\lambda}\|f\|_{L^\infty}$.
	\end{enumerate}
\end{myproposition}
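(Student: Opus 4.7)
For part (i), the plan is to test the weak formulation of \eqref{huhu} with the $H^1(\mathbb{R}^N)$-approximation of $\sign(u)$ given by $v_k=\frac{1}{k}T_k(u)$ and to pass to the limit $k\downarrow 0$. Since $b$ is nondecreasing with $b(0)=0$, the integrands $b(u)v_k$ and $uv_k$ are pointwise nonnegative and monotonically increase to $|b(u)|$ and $|u|$ as $k\downarrow 0$; the Laplace contribution $\varepsilon\int\nabla u\cdot\nabla v_k=\tfrac{\varepsilon}{k}\int_{\{|u|<k\}}|\nabla u|^2$ is nonnegative; and the fractional bilinear form is nonnegative because of the monotonicity of $T_k$. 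The convection term $\int F_\varepsilon(u)\cdot\nabla v_k\,\mathrm{d}x$ equals $\tfrac{1}{k}\int F_\varepsilon(T_k(u))\cdot\nabla T_k(u)\,\mathrm{d}x$ (using $u=T_k(u)$ on $\{|u|<k\}$) and so vanishes by Lemma \ref{alekslemma} applied to $T_k(u)\in H^1(\mathbb{R}^N)$. Bounding the right-hand side by $\int fv_k\le\|f\|_{L^1}$ (since $|v_k|\le 1$) and letting $k\downarrow 0$ via monotone convergence yields $\|b(u)\|_{L^1}+\varepsilon\|u\|_{L^1}\le\|f\|_{L^1}$. The estimate $\|u\|_{L^1}\le\tfrac{1}{\lambda}\|f\|_{L^1}$ then follows from $|b(u)|\ge\lambda|u|$, which is immediate from the growth condition $b(u)u\ge\lambda u^2$ together with $b(u)$ and $u$ having the same sign.

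For part (ii), fix any $c>\|f^+\|_{L^\infty}/\lambda$, so that $b(c)\ge\lambda c>\|f^+\|_{L^\infty}$, and test the weak formulation with $w_\delta=\tfrac{1}{\delta}T_\delta((u-c)^+)\in H^1(\mathbb{R}^N)$. On the support of $w_\delta$ we have $u>c>0$, whence $b(u)w_\delta\ge b(c)w_\delta\ge 0$ and $\varepsilon uw_\delta\ge 0$; the Laplace and fractional contributions are again nonnegative, and $\int fw_\delta\le\|f^+\|_{L^\infty}\int w_\delta$. The convection term, by the same type of computation as in part (i), equals $\tfrac{1}{\delta}\int\nabla V(u)\,\mathrm{d}x$ with the Lipschitz primitive $V(s)=\int_0^s F_\varepsilon(\tau)\chi_{(c,c+\delta)}(\tau)\,\mathrm{d}\tau$; since $V$ is bounded, $V(u)$ is supported on $\{u>c\}$ (a set of finite measure because $c>0$ and $u\in L^2(\mathbb{R}^N)$), and $\nabla V(u)\in L^1(\mathbb{R}^N)$ by Cauchy--Schwarz, so $V(u)\in W^{1,1}(\mathbb{R}^N)$ and this contribution vanishes, just as in Lemma \ref{alekslemma}. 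Letting $\delta\to 0$ by dominated convergence on the finite-measure set $\{u>c\}$ yields $b(c)\,|\{u>c\}|\le\|f^+\|_{L^\infty}\,|\{u>c\}|$, which forces $|\{u>c\}|=0$ and hence $u\le\|f^+\|_{L^\infty}/\lambda$ a.e. The symmetric test with $-\tfrac{1}{\delta}T_\delta((-u-c)^+)$ for $c>\|f^-\|_{L^\infty}/\lambda$ gives $u\ge -\|f^-\|_{L^\infty}/\lambda$ a.e., completing (ii).

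The main technical point in both parts is the vanishing of the first-order term $\int F_\varepsilon(u)\cdot\nabla v$ under these non-smooth, $u$-dependent test functions $v$: one has to verify that the relevant Lipschitz primitive of $F_\varepsilon$ composed with $u$ lies in a function space in which the divergence-theorem argument underlying Lemma \ref{alekslemma} applies. This is ensured by the boundedness of $F_\varepsilon$ (thanks to the truncation at level $1/\varepsilon$) together with the finite measure of the relevant super-level sets of $u$; beyond that, the proof is the standard Stampacchia test-function scheme adapted to the doubly nonlinear fractional setting.
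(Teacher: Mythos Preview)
Your proof is correct and follows essentially the same approach as the paper: both parts use the Stampacchia-type test functions $\frac{1}{k}T_k(u)$ and $\frac{1}{\delta}T_\delta((u-c)^+)$, discard the gradient and nonlocal terms by monotonicity, kill the convection term via the divergence-theorem argument behind Lemma~\ref{alekslemma}, and pass to the limit $k,\delta\to 0$. The only cosmetic differences are that you invoke monotone convergence where the paper cites Fatou, and in (ii) you work with a strict threshold $c>\|f^+\|_\infty/\lambda$ and conclude by infimum, while the paper takes $l\ge\|f^+\|_\infty/\lambda$ directly and obtains $(u-l)^+=0$ from $\lambda\int(u-l)^+\le 0$.
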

	\begin{proof}[Proof:]
		\begin{enumerate}[i.)]
			\item Let $k>0$. Applying the test function $\frac{1}{k}T_k(u)\in H^1(\mathbb{R}^N)$ in the weak formulation of \eqref{huhu}, we get
			\begin{align*}
			&\int\limits_{\mathbb{R}^N}b(u(x))\frac{1}{k}T_k(u(x)) \, \mathrm{d}x + \frac{\varepsilon}{k}\int\limits_{\mathbb{R}^N}u(x)T_k(u(x)) \, \mathrm{d}x + \frac{\varepsilon}{k}\int\limits_{\{|u|\leq k\}} |\nabla u(x)|^2 \, \mathrm{d}x\\
			-&\int\limits_{\{|u|\leq k\}}F_\varepsilon(u(x))\cdot\nabla u(x) \, \mathrm{d}x +\frac{C(N,s)}{2k}\int\limits_{\mathbb{R}^N}\int\limits_{\mathbb{R}^N}\frac{(u(x)-u(y))(T_k(u(x))-T_k(u(y)))}{|x-y|^{N+s}} \, \mathrm{d}x\mathrm{d}y\\
			=&\int\limits_{\mathbb{R}^N}f(x)\frac{1}{k}T_k(u(x)) \, \mathrm{d}x \, .
			\end{align*}
			If we use Lemma \ref{alekslemma} and the positivity of the terms on the left-hand side, we obtain
			\begin{align*}
			\int\limits_{\mathbb{R}^N}b(u(x))\frac{1}{k}T_k(u(x)) \, \mathrm{d}x  \leq \|f\|_{L^1} \, .
			\end{align*}
			Since for the test function it holds $\frac{1}{k}T_k(u)\to \sign u$ almost everywhere in $\mathbb{R}^N$ for $k\to0$, with Fatou's Lemma we get that
			\begin{align*}
			\|b(u)\|_{L^1}&=\int\limits_{\mathbb{R}^N}|b(u(x))| \, \mathrm{d}x = \int\limits_{\mathbb{R}^N}b(u(x))\sign b(u(x)) \, \mathrm{d}x\\
			&\leq \liminf\limits_{k\to0}\int\limits_{\mathbb{R}^N}b(u(x))\frac{1}{k}T_k(u(x)) \, \mathrm{d}x \leq \|f\|_{L^1} \, .
			\end{align*}
			This implies $b(u)\in L^1(\mathbb{R}^N)$ and thanks to the growth condition of $b$ also $u\in L^1(\mathbb{R}^N)$.
			\item Let now $k,l>0$. This time, we use the test function $\frac{1}{k}T_k^+(u-l)\in H^1(\mathbb{R}^N)$ in the weak formulation of \eqref{huhu}. We then get
			\begin{align*}
			&\int\limits_{\mathbb{R}^N}b(u(x))\frac{1}{k}T_k^+(u(x)-l) \, \mathrm{d}x + \frac{\varepsilon}{k}\int\limits_{\mathbb{R}^N}u(x)T_k^+(u(x)-l) \, \mathrm{d}x\\
			+&\frac{\varepsilon}{k}\int\limits_{\{l<u< l+k\}} |\nabla u(x)|^2 \, \mathrm{d}x-\int\limits_{\{l<u< l+k\}}F_\varepsilon(u(x))\cdot\nabla u(x) \, \mathrm{d}x\\
			+&\frac{C(N,s)}{2k}\int\limits_{\mathbb{R}^N}\int\limits_{\mathbb{R}^N}\frac{(u(x)-u(y))(T_k^+(u(x)-l)-T_k^+(u(y)-l))}{|x-y|^{N+s}} \, \mathrm{d}x\mathrm{d}y\\
			=&\int\limits_{\mathbb{R}^N}f(x)\frac{1}{k}T_k^+(u(x)-l) \, \mathrm{d}x \, .
			\end{align*}
			With the positivity of the integrands, Lemma \ref{alekslemma} and the growth condition of $b$, we obtain
			\begin{align*}
			&\lambda\int\limits_{\mathbb{R}^N}u(x)\frac{1}{k}T_k^+(u(x)-l) \, \mathrm{d}x \leq \int\limits_{\mathbb{R}^N}f(x)\frac{1}{k}T_k^+(u(x)-l) \, \mathrm{d}x\\
			\Rightarrow &\lambda\int\limits_{\mathbb{R}^N}(u(x)-l)\frac{1}{k}T_k^+(u(x)-l) \, \mathrm{d}x \leq \int\limits_{\mathbb{R}^N}(f(x)-\lambda l)\frac{1}{k}T_k^+(u(x)-l) \, \mathrm{d}x \, .
			\end{align*}
			Let $l\geq \frac{\|f^+\|_\infty}{\lambda}$, we then get for $k\to 0$:
			\begin{align*}
			\lambda\int\limits_{\mathbb{R}^N}(u(x)-l)^+ \, \mathrm{d}x \leq \int\limits_{\mathbb{R}^N}(f(x)-\lambda l)\operatorname{sign}^+ (u(x)-l) \, \mathrm{d}x \leq 0 \, .
			\end{align*}
			Since the left integrand is positive, it follows that $(u-l)^+ =0$ almost everywhere in $\mathbb{R}^N$ and therefore $u\leq l$ almost everywhere in $\mathbb{R}^N$. Analogously, we can show that there exists $\tilde{l}>0$ such that $-\tilde{l}\leq u$ almost everywhere in $\mathbb{R}^N$. Thus, the claim follows.
		\end{enumerate}
	\end{proof}

\begin{mybemerkung}
For any $\varepsilon>0$, the unique weak solution $u_\varepsilon$ of \eqref{huhu} is in $H^1(\mathbb{R}^N)$, thus $b(u_{\varepsilon})$ is in $L^2(\mathbb{R}^N)$. Moreover, since $F_{\varepsilon}=(F^1_{\varepsilon},\ldots,F^N_{\varepsilon})$ is Lipschitz continuous, from the chain rule for Sobolev functions it follows that 
\[-\operatorname{div}\,F_{\varepsilon}(u_{\varepsilon})=\sum_{i=1}^N (F^i_{\varepsilon})'(u_{\varepsilon})\frac{\partial u_{\varepsilon}}{\partial x_i}\in L^2(\mathbb{R}^N).\]
With the same arguments as in the proof of Lemma \ref{soboleveinbettung} it follows that $(-\Delta)^\frac{s}{2}u_{\varepsilon}\in L^2(\mathbb{R}^N)$. Therefore,
\[-\Delta u_{\varepsilon}=-u_{\varepsilon}-\frac{1}{\varepsilon}\left[b(u_{\varepsilon})+\operatorname{div}\,(F_{\varepsilon}(u_{\varepsilon}))+(-\Delta)^\frac{s}{2}u_{\varepsilon}\right]\]
in $L^2(\mathbb{R}^N)$ and we may conclude that $u_{\varepsilon}\in H^2_{\operatorname{loc}}(\mathbb{R}^N)$. 
\end{mybemerkung}

\begin{mylemma}
For any $\varepsilon>0$, the unique weak solution $u_\varepsilon$ of \eqref{huhu} satisfies the entropy inequality for all convex entropies $\eta\in C^2(\mathbb{R})$, $\phi_{\varepsilon}=(\phi^1_{\varepsilon},\dots,\phi_{\varepsilon}^N)$ with $(\phi_{\varepsilon}^i)' =\eta'(F_{\varepsilon}^i)'$ for $i=1,\dots,N$, all $\varphi\in\mathcal{D}(\mathbb{R}^N)$ with $\varphi\geq0$, and all $r>0$:
\begin{equation}
\begin{split}
\label{entroop}
&\int\limits_{\mathbb{R}^N}(f(x)-\varepsilon u_\varepsilon(x)-b(u_\varepsilon(x)))\eta '(u_\varepsilon(x))\varphi(x) \, \mathrm{d}x-\varepsilon\int\limits_{\mathbb{R}^N}\nabla(\eta(u_\varepsilon(x))\cdot\nabla\varphi(x) \, \mathrm{d}x\\
+&C(N,s)\int\limits_{\mathbb{R}^N}\int\limits_{\{|z|\geq r\}}\eta'(u_\varepsilon(x))\frac{u_\varepsilon(x+z)-u_\varepsilon(x)}{|z|^{N+s}}\varphi(x) \, \mathrm{d}z\mathrm{d}x\\
+&C(N,s)\int\limits_{\mathbb{R}^N}\int\limits_{\{|z|\leq r\}}\eta(u_\varepsilon(x))\frac{\varphi(x+z)-\varphi(x)-\nabla\varphi(x)\cdot z}{|z|^{N+s}} \, \mathrm{d}z\mathrm{d}x\\
+&\int\limits_{\mathbb{R}^N}\phi_\varepsilon(u_\varepsilon(x))\cdot\nabla \varphi(x) \, \mathrm{d}x\geq 0 \, .
\end{split}
\end{equation}	
\end{mylemma}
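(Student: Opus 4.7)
The plan is to multiply the pointwise equation \eqref{huhu} by $\eta'(u_\varepsilon)\varphi$, integrate over $\mathbb{R}^N$, and use convexity of $\eta$ together with the Droniou--Imbert splitting of Proposition~\ref{splitprop} to convert the identity into the desired inequality. The starting point is the Remark preceding the lemma, which guarantees that $u_\varepsilon\in H^1(\mathbb{R}^N)\cap L^\infty(\mathbb{R}^N)\cap H^2_{\operatorname{loc}}(\mathbb{R}^N)$ and that \eqref{huhu} holds pointwise almost everywhere, so the test procedure is justified.

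For the local terms I would exploit the chain rule for Sobolev functions. A standard integration by parts yields
\[
-\varepsilon\int_{\mathbb{R}^N}\Delta u_\varepsilon\,\eta'(u_\varepsilon)\varphi\,dx=\varepsilon\int_{\mathbb{R}^N}\eta''(u_\varepsilon)|\nabla u_\varepsilon|^2\varphi\,dx+\varepsilon\int_{\mathbb{R}^N}\nabla\eta(u_\varepsilon)\cdot\nabla\varphi\,dx,
\]
and the first summand is nonnegative by convexity of $\eta$, hence may be dropped. For the convection term, $\eta'(u_\varepsilon)\operatorname{div}F_\varepsilon(u_\varepsilon)=\operatorname{div}\phi_\varepsilon(u_\varepsilon)$ by the chain rule, and another integration by parts produces $-\int_{\mathbb{R}^N}\phi_\varepsilon(u_\varepsilon)\cdot\nabla\varphi\,dx$ with the correct sign.

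For the fractional Laplacian I would insert the splitting from Proposition~\ref{splitprop}. The regular integral over $\{|z|\ge r\}$ matches, up to sign, the first nonlocal summand of \eqref{entroop} and cancels exactly once moved to the other side. For the singular integral over $\{|z|\le r\}$ the essential ingredient is the subgradient inequality for the convex $\eta$,
\[
\eta'(u_\varepsilon(x))\bigl[u_\varepsilon(x+z)-u_\varepsilon(x)-\nabla u_\varepsilon(x)\cdot z\bigr]\le\eta(u_\varepsilon(x+z))-\eta(u_\varepsilon(x))-\nabla\eta(u_\varepsilon(x))\cdot z,
\]
obtained by subtracting $\eta'(u_\varepsilon(x))\nabla u_\varepsilon(x)\cdot z=\nabla\eta(u_\varepsilon(x))\cdot z$ from $\eta(b)-\eta(a)\ge\eta'(a)(b-a)$. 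Multiplying by $-C(N,s)/|z|^{N+s}\le 0$, integrating against $\varphi\ge0$, and then performing a nonlocal integration by parts (Fubini, the change of variable $x\mapsto x-z$, and the vanishing of $\int_{\{|z|\le r\}}z/|z|^{N+s}\,dz$ by odd symmetry, which simultaneously eliminates both correction terms $\nabla\eta(u_\varepsilon(x))\cdot z$ and $\nabla\varphi(x)\cdot z$) transforms the bound into the second nonlocal term of \eqref{entroop}. Reassembling all pieces gives \eqref{entroop}.

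The principal obstacle is that Proposition~\ref{splitprop} and the convexity manipulation require enough smoothness, while $u_\varepsilon$ only lies in $H^2_{\operatorname{loc}}(\mathbb{R}^N)\cap L^\infty(\mathbb{R}^N)$. I would resolve this by approximating $u_\varepsilon$ by mollifications $u_\varepsilon^n=u_\varepsilon*\rho_n\in C_b^\infty(\mathbb{R}^N)$, for which the above calculation is valid verbatim, and then passing to the limit $n\to\infty$: Proposition~\ref{lokglm} together with the uniform $L^\infty$-bound on $(u_\varepsilon^n)_n$ controls $(-\Delta)^{s/2}u_\varepsilon^n$ locally uniformly, while the remaining integrals converge by Lebesgue's dominated convergence, using the compact support of $\varphi$, the uniform boundedness of $\eta^{(j)}(u_\varepsilon^n)$ on the range of $u_\varepsilon$, and the second-order Taylor estimate $|\varphi(x+z)-\varphi(x)-\nabla\varphi(x)\cdot z|\le\tfrac12\|D^2\varphi\|_\infty|z|^2$ used already in the proof of Lemma~\ref{l1} to make the singular integrand absolutely integrable.
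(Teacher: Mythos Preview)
Your approach is essentially the paper's: start from the pointwise equation (available thanks to $u_\varepsilon\in H^2_{\mathrm{loc}}$), multiply by $\eta'(u_\varepsilon)\varphi$, use convexity on the singular part of the Droniou--Imbert splitting, transfer the singular nonlocal term onto $\varphi$, and handle the lack of $C^2_b$-regularity by approximating $u_\varepsilon$ with a smooth sequence and passing to the limit via dominated convergence. The paper does exactly this, quoting Proposition~\ref{klassischeentropieschwacheloesung}~i.) for the convexity step and then carrying out the transfer identity and the limit $n\to\infty$ explicitly.

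There is one genuine technical slip in your transfer argument. You justify
\[
\int_{\mathbb{R}^N}\int_{\{|z|\le r\}}\frac{\eta(u(x+z))-\eta(u(x))-\nabla\eta(u(x))\cdot z}{|z|^{N+s}}\varphi(x)\,dz\,dx
=\int_{\mathbb{R}^N}\int_{\{|z|\le r\}}\eta(u(x))\frac{\varphi(x+z)-\varphi(x)-\nabla\varphi(x)\cdot z}{|z|^{N+s}}\,dz\,dx
\]
by ``the vanishing of $\int_{\{|z|\le r\}}z/|z|^{N+s}\,dz$ by odd symmetry'', splitting off the first-order correction terms. But for $s=1$ (which is admissible here) the integral $\int_{\{|z|\le r\}}|z|/|z|^{N+s}\,dz$ diverges, so the correction terms are not separately integrable and cannot be removed in this way; they are precisely what makes the full integrand absolutely integrable near $z=0$. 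The paper's appendix (proof of Proposition~\ref{klassischeentropieschwacheloesung}~i.)) instead writes the numerator via the integral Taylor remainder $\int_0^1(1-\tau)D^2(\eta(u(x+\tau z)))z\cdot z\,d\tau$, integrates by parts in $x$, and then applies the substitution $(\tau,x,z)\mapsto(\tau,x+\tau z,-z)$ to obtain an expression symmetric in $\eta(u)$ and $\varphi$; this works for all $s\in(0,2)$. Replace your odd-symmetry line with that argument and the proof goes through.
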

\begin{proof}
Since the weak solution $u_\varepsilon$ of \eqref{huhu} is in $H_{loc}^2(\mathbb{R}^N)$, the equation \eqref{huhu} is satisfied pointwise almost everywhere. Similar to the proof of Proposition \ref{klassischeentropieschwacheloesung} i.) (see Appendix \ref{proof1}), we are able to show the following inequality for all convex entropies $\eta\in C^2(\mathbb{R})$, $\phi_{\varepsilon}=(\phi^1_{\varepsilon},\dots,\phi_{\varepsilon}^N)$ with $(\phi_{\varepsilon}^i)' =\eta'(F_{\varepsilon}^i)'$ for $i=1,\dots,N$, all $\varphi\in\mathcal{D}(\mathbb{R}^N)$ with $\varphi\geq0$, and all $r>0$:
\begin{align}
\nonumber
&\int\limits_{\mathbb{R}^N}(f(x)-b(u_\varepsilon(x)))\eta'(u_\varepsilon(x))\varphi(x)+\phi(u_\varepsilon(x))\cdot\nabla\varphi(x) \, \mathrm{d}x\\
\label{dotz1}
&+C(N,s)\int\limits_{\mathbb{R}^N}\int\limits_{\{|z|\geq r\}}\eta'(u_\varepsilon(x))\frac{u_\varepsilon(x+z)-u_\varepsilon(x)}{|z|^{N+s}}\varphi(x) \, \mathrm{d}z\mathrm{d}x\\
\nonumber
&+C(N,s)\int\limits_{\mathbb{R}^N}\int\limits_{\{|z|\leq r\}}\frac{\eta(u_\varepsilon(x+z))-\eta(u_\varepsilon(x))-\nabla\eta(u_\varepsilon(x))\cdot z}{|z|^{N+s}}\varphi(x) \, \mathrm{d}z\mathrm{d}x\geq 0 \, .
\end{align}
It is left to show that we can transfer the fractional derivative in the last term on the left-hand side onto the test function $\varphi$. To this, we can choose a sequence $(u_\varepsilon^n)_n\subseteq C_b^2(\mathbb{R}^N)$ such that
\begin{align*}
u_\varepsilon^n \to u_\varepsilon \quad &\text{in} \ L_{loc}^2(\mathbb{R}^N) \ \text{and a. e. in} \ \mathbb{R}^N \, ,\\
\nabla u_\varepsilon^n \to \nabla u_\varepsilon \quad &\text{in} \ L_{loc}^2(\mathbb{R}^N) \ \text{and a. e. in} \ \mathbb{R}^N \, ,\\
D^2 u_\varepsilon^n \to D^2 u_\varepsilon \quad &\text{in} \ L_{loc}^2(\mathbb{R}^N) \ \text{and a. e. in} \ \mathbb{R}^N \, .
\end{align*}
From Lebesgue's dominated convergence theorem, the proof of Proposition \ref{klassischeentropieschwacheloesung} i.) (see Appendix \ref{proof1}) on the level of the approximating sequence for every $n\in\mathbb{N}$, and again, Lebesgue's dominated convergence theorem, we have
\begin{align*}
&\int\limits_{\mathbb{R}^N}\int\limits_{\{|z|\leq r\}}\frac{\eta(u_\varepsilon(x+z))-\eta(u_\varepsilon(x))-\nabla\eta(u_\varepsilon(x))\cdot z}{|z|^{N+s}}\varphi(x) \, \mathrm{d}z\mathrm{d}x\\
=&\lim\limits_{n\to\infty}\int\limits_{\mathbb{R}^N}\int\limits_{\{|z|\leq r\}}\frac{\eta(u_\varepsilon^n(x+z))-\eta(u_\varepsilon^n(x))-\nabla\eta(u_\varepsilon^n(x))\cdot z}{|z|^{N+s}}\varphi(x) \, \mathrm{d}z\mathrm{d}x\\
=&\lim\limits_{n\to\infty}\int\limits_{\mathbb{R}^N}\int\limits_{\{|z|\leq r\}}\eta(u_\varepsilon^n(x))\frac{\varphi(x+z)-\varphi(x)-\nabla\varphi(x)\cdot z}{|z|^{N+s}} \, \mathrm{d}z\mathrm{d}x\\
=&\int\limits_{\mathbb{R}^N}\int\limits_{\{|z|\leq r\}}\eta(u_\varepsilon(x))\frac{\varphi(x+z)-\varphi(x)-\nabla\varphi(x)\cdot z}{|z|^{N+s}} \, \mathrm{d}z\mathrm{d}x \, .
\end{align*}
\end{proof}
\subsection{Convergence results for the approximating solutions}
In the next step, we want to pass to the limit with $\varepsilon\to0$. We apply the weak solution $u=u_{\varepsilon}\in H^1(\mathbb{R}^N)$ as a test function and use Lemma \ref{alekslemma} to get
\begin{align*}
\varepsilon\|u_{\varepsilon}\|_{L^2}^2&+\int\limits_{\mathbb{R}^N}b(u_{\varepsilon}(x))u_{\varepsilon}(x) \, \mathrm{d}x+\varepsilon\|\nabla u_{\varepsilon}\|_{L^2}^2+\frac{C(N,s)}{2}[u_{\varepsilon}]_{\frac{s}{2}}^2\leq \|f\|_{L^2}\|u_{\varepsilon}\|_{L^2} \, .
\end{align*}
Therefore, we get
\begin{align*}
\lambda\int\limits_{\mathbb{R}^N}|u_\varepsilon(x)|^2 \, \mathrm{d}x \leq \|f\|_{L^2}\|u_{\varepsilon}\|_{L^2}
\end{align*}
and also
\begin{align*}
\varepsilon\|u_{\varepsilon}\|_{L^2}^2+\varepsilon\|\nabla u_{\varepsilon}\|_{L^2}^2+\frac{C(N,s)}{2}[u_{\varepsilon}]_{\frac{s}{2}}^2\leq \|f\|_{L^2}\|u_{\varepsilon}\|_{L^2} \, .
\end{align*}
This implies
\begin{align}
\label{qq}
(u_{\varepsilon})_{\varepsilon} \ &\text{is bounded in} \ L^2(\mathbb{R}^N)\, , \\
\label{ww}
(\sqrt{\varepsilon}u_\varepsilon)_\varepsilon \ &\text{is bounded in} \ L^2(\mathbb{R}^N) \, ,\\
\label{ee}
(\sqrt{\varepsilon}\nabla u_{\varepsilon})_{\varepsilon} \ &\text{is bounded in} \ [L^2(\mathbb{R}^N)]^N \, , \\
\label{rr}
(u_{\varepsilon})_{\varepsilon} \ &\text{is bounded in} \ H^\frac{s}{2}(\mathbb{R}^N) \, . 
\end{align}
From \eqref{qq}-\eqref{rr} we get, for a proper subsequence that is still denoted by $(u_\varepsilon)_\varepsilon$, the following convergence results for $\varepsilon\to0$:
\begin{align}
\label{122}
u_{\varepsilon}\rightharpoonup u \ \ \ &\text{in} \ H^\frac{s}{2}(\mathbb{R}^N) \, ,\\
\label{1224}
\varepsilon u_{\varepsilon}\rightharpoonup 0 \ \ \ &\text{in} \ L^2(\mathbb{R}^N) \, ,\\
\label{1225}
\varepsilon\nabla u_{\varepsilon}\rightharpoonup 0 \ \ \ &\text{in} \ L^2(\mathbb{R}^N) \, ,\\
\label{123}
u_{\varepsilon} \to u \ \ \ &\text{in} \ L_\text{loc}^2(\mathbb{R}^N) \ \text{and a. e. in} \ \mathbb{R}^N \, .
\end{align}
Since we know, from Proposition \ref{absch} ii.), that $(u_\varepsilon)_\varepsilon$ is uniformly bounded in $L^\infty(\mathbb{R}^N)$, we can choose the subsequence such that
\begin{align}
\label{124}
u_{\varepsilon}\rightharpoonup_* u \ \text{in} \ L^\infty(\mathbb{R}^N)
\end{align}
for $\varepsilon\to0$. We now consider the entropy inequality for the approximating solutions $u_\varepsilon$, i.e.
\begin{equation}
\begin{split}
\label{entroop3}
&\int\limits_{\mathbb{R}^N}(f(x)-\varepsilon u_{\varepsilon}(x)-b(u_{\varepsilon}(x)))\eta '(u_{\varepsilon}(x))\varphi(x) \, \mathrm{d}x-\varepsilon\int\limits_{\mathbb{R}^N}\nabla(\eta(u_\varepsilon(x))\cdot\nabla\varphi(x) \, \mathrm{d}x\\
+&C(N,s)\int\limits_{\mathbb{R}^N}\int\limits_{\{|z|\geq r\}}\eta'(u_{\varepsilon}(x))\frac{u_{\varepsilon}(x+z)-u_{\varepsilon}(x)}{|z|^{N+s}}\varphi(x) \, \mathrm{d}z\mathrm{d}x\\
+&C(N,s)\int\limits_{\mathbb{R}^N}\int\limits_{\{|z|\leq r\}}\eta(u_{\varepsilon}(x))\frac{\varphi(x+z)-\varphi(x)-\nabla\varphi(x)\cdot z}{|z|^{N+s}} \, \mathrm{d}z\mathrm{d}x\\
+&\int\limits_{\mathbb{R}^N}\phi_\varepsilon(u_{\varepsilon}(x))\cdot\nabla \varphi(x) \, \mathrm{d}x \geq 0
\end{split}
\end{equation}
and it remains to prove that we can pass to the limit with $\varepsilon\to0$ in \eqref{entroop3} using \eqref{122} - \eqref{124}. We would then obtain $u \in H^\frac{s}{2}(\mathbb{R}^N)\cap L^\infty(\mathbb{R}^N)$ which satisfies the entropy inequality and is therefore the desired entropy solution of problem \eqref{problem} (Notice that $b(u)\in L^1(\mathbb{R}^N)$ follows from Proposition \ref{l1absch} i.), \eqref{123} and Fatou's Lemma).\\
With the above convergence results, we can calculate
\begin{align*}
\int\limits_{\mathbb{R}^N}(f(x)-\varepsilon u_{\varepsilon}(x)-b(u_{\varepsilon}(x)))\eta '(u_{\varepsilon}(x))\varphi(x) \, \mathrm{d}x \to \int\limits_{\mathbb{R}^N}(f(x)-b(u(x)))\eta '(u(x))\varphi(x) \, \mathrm{d}x
\end{align*}
for $\varepsilon\to0$. Furthermore, we have $\varepsilon\nabla(\eta(u_\varepsilon(x)))\cdot\nabla\varphi(x)=\varepsilon\nabla u_\varepsilon(x)\eta'(u_\varepsilon(x))\cdot\nabla\varphi(x)$ and from this, we get for the second integral, together with \eqref{1225}, that
\begin{align*}
\varepsilon\int\limits_{\mathbb{R}^N}\nabla(\eta(u_\varepsilon(x))\nabla\varphi(x) \, \mathrm{d}x \to 0
\end{align*}
for $\varepsilon\to0$. For the nonlocal terms we argue as follows: First we get
\begin{align*}
\eta'(u_{\varepsilon}(x))\frac{u_{\varepsilon}(x+z)-u_{\varepsilon}(x)}{|z|^{N+s}}\varphi(x) \to \eta'(u(x))\frac{u(x+z)-u(x)}{|z|^{N+s}}\varphi(x)
\end{align*}
for almost all $x\in\mathbb{R}^N, z\in B_r(0)^c$ and $\varepsilon\to0$. Further calculations show that
\begin{align*}
\left|\eta'(u_{\varepsilon}(x))\frac{u_{\varepsilon}(x+z)-u_{\varepsilon}(x)}{|z|^{N+s}}\varphi(x) \right| \leq C\frac{|\varphi(x)|}{|z|^{N+s}} \in L^1(\mathbb{R}^N\times B_r(0)^c) \, ,
\end{align*}
for a constant $C>0$, not depending on $\varepsilon$, since $(u_\varepsilon)_\varepsilon$ is uniformly bounded in $L^\infty(\mathbb{R}^N)$. With Lebesgue's dominated convergence theorem, we then get
\begin{align*}
&C(N,s)\int\limits_{\mathbb{R}^N}\int\limits_{\{|z|\geq r\}}\eta'(u_{\varepsilon}(x))\frac{u_{\varepsilon}(x+z)-u_{\varepsilon}(x)}{|z|^{N+s}}\varphi(x) \, \mathrm{d}z\mathrm{d}x\\
\to &C(N,s)\int\limits_{\mathbb{R}^N}\int\limits_{\{|z|\geq r\}}\eta'(u(x))\frac{u(x+z)-u(x)}{|z|^{N+s}}\varphi(x) \, \mathrm{d}z\mathrm{d}x
\end{align*}
for $\varepsilon\to0$. For the singular part
\begin{align*}
\int\limits_{\mathbb{R}^N}\int\limits_{\{|z|\leq r\}}\eta(u_\varepsilon(x))\frac{\varphi(x+z)-\varphi(x)-\nabla\varphi(x)\cdot z}{|z|^{N+s}} \, \mathrm{d}z\mathrm{d}x
\end{align*}
we also get that
\begin{align*}
\eta(u_\varepsilon(x))\frac{\varphi(x+z)-\varphi(x)-\nabla\varphi(x)\cdot z}{|z|^{N+s}} \to \eta(u(x))\frac{\varphi(x+z)-\varphi(x)-\nabla\varphi(x)\cdot z}{|z|^{N+s}}
\end{align*}
for almost all $x\in\mathbb{R}^N, z\in B_r(0)$ and $\varepsilon\to0$. We also notice that $|\varphi(x+z)-\varphi(x)-\nabla\varphi(x)\cdot z|\leq\|D^2\varphi\|_{L^\infty}|z|^2$ for all $x,z\in \mathbb{R}^N$ holds. Therefore, the integrand is dominated by the function
\begin{align*}
C\frac{\|D^2\varphi\|_{L^\infty}}{|z|^{N+s-2}}\chi_{\{|z|\leq r\}}\chi_{\{|x|\leq R+r\}}\in L^1(\mathbb{R}^N\times B_r(0)) \, ,
\end{align*}
for a constant $C>0$, not depending on $\varepsilon$, where $R>0$ is such that $\supp\varphi\subseteq B_R(0)$. Again, by Lebesgue's dominated convergence theorem, we get that
\begin{align*}
\int\limits_{\mathbb{R}^N}\int\limits_{\{|z|\leq r\}}\eta(u_\varepsilon(x))\frac{\varphi(x+z)-\varphi(x)-\nabla\varphi(x)\cdot z}{|z|^{N+s}} \, \mathrm{d}z\mathrm{d}x\\
\to \int\limits_{\mathbb{R}^N}\int\limits_{\{|z|\leq r\}}\eta(u(x))\frac{\varphi(x+z)-\varphi(x)-\nabla\varphi(x)\cdot z}{|z|^{N+s}} \, \mathrm{d}z\mathrm{d}x
\end{align*}
for $\varepsilon\to0$. Since, again, $(u_\varepsilon)_\varepsilon$ is uniformly bounded in $L^\infty(\mathbb{R}^N)$, there exists $\varepsilon_0>0$ such that $\|u_\varepsilon\|_\infty \leq \frac{1}{\varepsilon}$ $\forall0<\varepsilon<\varepsilon_0$. Therefore, we have $T_{1/\varepsilon}(u_\varepsilon)=u_\varepsilon$ $\forall 0<\varepsilon<\varepsilon_0$ and in this case we get
\begin{align*}
\int\limits_{\mathbb{R}^N}\phi_\varepsilon(u_{\varepsilon}(x))\nabla \varphi(x) \, \mathrm{d}x = \int\limits_{\mathbb{R}^N}\phi(u_{\varepsilon}(x))\nabla \varphi(x) \, \mathrm{d}x\to\int\limits_{\mathbb{R}^N}\phi(u(x))\nabla \varphi(x) \, \mathrm{d}x
\end{align*}
for $\varepsilon\to0$. This follows by applying Lebesgue's dominated convergence theorem, since
\begin{align*}
\phi(u_{\varepsilon}(x))=\int\limits_0^{u_\varepsilon(x)}\eta'(\sigma)F'(\sigma) \, \mathrm{d}\sigma \to \int\limits_0^{u(x)}\eta'(\sigma)F'(\sigma) \, \mathrm{d}\sigma=\phi(u(x))
\end{align*}
almost everywhere in $\mathbb{R}^N$ for $\varepsilon\to0$ from \eqref{123}, $\phi(u_{\varepsilon})$ is uniformly bounded in $L^\infty(\mathbb{R}^N)$ and the integral is taken over the compact support of $\varphi$.\\
This completes the proof of Theorem \ref{existenzsatz}.

\section{Appendix}
\begin{proof}[Proof of Proposition \ref{klassischeentropieschwacheloesung}]
	\label{proof1}
	\begin{enumerate}[i.)]
		\item Let $u\in C_b^2(\mathbb{R}^N)$ be such that for all $x\in \mathbb{R}^N$ we have
		\begin{align}
		\label{punktweise}
		b(u(x))+\divergence (F(u(x)))+(-\Delta)^{\frac{s}{2}}u(x)=f(x) \, .
		\end{align}
		Since $\eta\in C^2(\mathbb{R})$ is convex, it holds
		\begin{align*}
		\eta(b)-\eta(a)\geq \eta'(a)(b-a)
		\end{align*}
		for all $a,b\in\mathbb{R}$. Let $a=u(x)$ and $b=u(x+z)$, for $x,z\in\mathbb{R}^N,$ then it follows
		\begin{align*}
		\eta(u(x+z))-\eta(u(x))-\nabla\eta(u(x))\cdot z \geq \eta'(u(x))(u(x+z)-u(x)-\nabla u(x)\cdot z)
		\end{align*}
		and therefore
		\begin{equation}
		\begin{split}
		\label{hui}
		\eta'(u(x))(-\Delta)^{\frac{s}{2}}u(x)=&-C(N,s)\int\limits_{\{|z|\geq r\}}\eta'(u(x))\frac{u(x+z)-u(x)}{|z|^{N+s}} \, \mathrm{d}z\\
		&-C(N,s)\int\limits_{\{|z|\leq r\}}\eta'(u(x))\frac{u(x+z)-u(x)-\nabla u(x)\cdot z}{|z|^{N+s}} \, \mathrm{d}z\\
		\geq&-C(N,s)\int\limits_{\{|z|\geq r\}}\eta'(u(x))\frac{u(x+z)-u(x)}{|z|^{N+s}} \, \mathrm{d}z\\
		&-C(N,s)\int\limits_{\{|z|\leq r\}}\frac{\eta(u(x+z))-\eta(u(x))-\nabla\eta(u(x))\cdot z}{|z|^{N+s}} \, \mathrm{d}z \, .
		\end{split}
		\end{equation}
		If we multiply equation (\ref{punktweise}) pointwise with $\eta'(u(x))$, for all $x\in\mathbb{R}^N$, and use the inequality \eqref{hui}, we get
		\begin{align*}
		&(b(u(x))-f(x))\eta'(u(x))+\divergence (\phi(u(x)))-C(N,s)\int\limits_{\{|z|\geq r\}}\eta'(u(x))\frac{u(x+z)-u(x)}{|z|^{N+s}} \, \mathrm{d}z\\
		&-C(N,s)\int\limits_{\{|z|\leq r\}}\frac{\eta(u(x+z))-\eta(u(x))-\nabla\eta(u(x))\cdot z}{|z|^{N+s}} \, \mathrm{d}z\leq 0 \, ,
		\end{align*}
		where we considered the structure of the entropy flux $\phi'=\eta'F'$ to get $\divergence (\phi(u(x)))=\divergence(F(u(x)))\eta'(u(x))$. If we now multiply this inequality pointwise with $\varphi(x)$, $\varphi\in \mathcal{D}(\mathbb{R}^N)$, $\varphi\geq0$, and then integrate with respect to $x$ over $\mathbb{R}^N$, we get, by applying the integration-by-parts rule to the divergence term, that
		\begin{align}
		\nonumber
		&\int\limits_{\mathbb{R}^N}(f(x)-b(u(x)))\eta'(u(x))\varphi(x)+\phi(u(x))\cdot\nabla\varphi(x) \, \mathrm{d}x\\
		\label{dotz}
		&+C(N,s)\int\limits_{\mathbb{R}^N}\int\limits_{\{|z|\geq r\}}\eta'(u(x))\frac{u(x+z)-u(x)}{|z|^{N+s}}\varphi(x) \, \mathrm{d}z\mathrm{d}x\\
		\nonumber
		&+C(N,s)\int\limits_{\mathbb{R}^N}\int\limits_{\{|z|\leq r\}}\frac{\eta(u(x+z))-\eta(u(x))-\nabla\eta(u(x))\cdot z}{|z|^{N+s}}\varphi(x) \, \mathrm{d}z\mathrm{d}x\geq 0 \, .
		\end{align}
		Now we have to transfer the fractional derivative in the last term of the left-hand side onto the test function $\varphi$. With Taylor's Formula and Fubini's Theorem we get
		\begin{align*}
		&\int\limits_{\mathbb{R}^N}\int\limits_{\{|z|\leq r\}}\frac{\eta(u(x+z))-\eta(u(x))-\nabla\eta(u(x))\cdot z}{|z|^{N+s}}\varphi(x) \, \mathrm{d}z\mathrm{d}x\\
		= &\int\limits_0^1\int\limits_{\mathbb{R}^N}\int\limits_{\{|z|\leq r\}}\frac{(1-\tau)\mathrm{D}^2(\eta(u(x+\tau z))) z\cdot z}{|z|^{N+s}}\varphi(x) \, \mathrm{d}z\mathrm{d}x\mathrm{d}\tau\\
		= &\int\limits_0^1\int\limits_{\{|z|\leq r\}}\frac{(1-\tau)}{|z|^{N+s}}\left( \ \int\limits_{\mathbb{R}^N}\mathrm{D}^2(\eta(u(x+\tau z))) z\cdot(\varphi(x)z) \, \mathrm{d}x\right) \, \mathrm{d}z\mathrm{d}\tau\\
		= &-\int\limits_0^1\int\limits_{\{|z|\leq r\}}\frac{(1-\tau)}{|z|^{N+s}}\left( \ \int\limits_{\mathbb{R}^N}\nabla(\eta(u(x+\tau z)))\cdot z (\nabla\varphi(x)\cdot z) \, \mathrm{d}x\right) \, \mathrm{d}z\mathrm{d}\tau\\
		= &-\int\limits_0^1\int\limits_{\mathbb{R}^N}\int\limits_{\{|z|\leq r\}}\frac{(1-\tau)}{|z|^{N+s}}\nabla(\eta(u(x+\tau z)))\cdot z (\nabla\varphi(x)\cdot z) \, \mathrm{d}z\mathrm{d}x\mathrm{d}\tau\\
		= &-\int\limits_0^1\int\limits_{\mathbb{R}^N}\int\limits_{\{|z|\leq r\}}\frac{(1-\tau)}{|z|^{N+s}}\nabla(\eta(u(x)))\cdot z (\nabla\varphi(x+\tau z)\cdot z) \, \mathrm{d}z\mathrm{d}x\mathrm{d}\tau \, ,
		\end{align*}
		where we used the substitution of variables $(\tau,x,z)\to(\tau,x+\tau z,-z)$. Interchanging the roles of $\eta(u)$ and $\varphi$ in the last equality and calculating the exact steps backwards, we then get
		\begin{align*}
		&\int\limits_{\mathbb{R}^N}\int\limits_{\{|z|\leq r\}}\frac{\eta(u(x+z))-\eta(u(x))-\nabla\eta(u)(x)\cdot z}{|z|^{N+s}}\varphi(x) \, \mathrm{d}z\mathrm{d}x\\
		=&\int\limits_{\mathbb{R}^N}\int\limits_{\{|z|\leq r\}}\eta(u(x))\frac{\varphi(x+z)-\varphi(x)-\nabla\varphi(x)\cdot z}{|z|^{N+s}} \, \mathrm{d}z\mathrm{d}x
		\end{align*}
		and with it the claim.
		\item Let $u$ be an entropy solution of \eqref{problem}. If we first choose $\eta(r)=r$ $\forall r\in\mathbb{R}$, then it follows
		\begin{align*}
		&\int\limits_{\mathbb{R}^N}(f(x)-b(u(x)))\varphi(x)+F(u(x))\cdot\nabla\varphi(x) \, \mathrm{d}x +C(N,s)\int\limits_{\mathbb{R}^N}\int\limits_{\{|z|\geq r\}}\frac{u(x+z)-u(x)}{|z|^{N+s}}\varphi(x) \, \mathrm{d}z\mathrm{d}x\\
		\nonumber
		&+C(N,s)\int\limits_{\mathbb{R}^N}\int\limits_{\{|z|\leq r\}}u(x)\frac{\varphi(x+z)-\varphi(x)-\nabla\varphi(x)\cdot z}{|z|^{N+s}} \, \mathrm{d}z\mathrm{d}x\geq 0
		\end{align*}
		for all $\varphi\in\mathcal{D}(\mathbb{R}^N)$, $\varphi\geq 0$. Analogously, for $\eta(r)=-r$ $\forall r\in\mathbb{R}$, we can show the converse inequality to then get equality. By substituion of variables $(x,z)\mapsto(x+z,-z)$ and Fubini's Theorem, we also get
		\begin{align*}
		\int\limits_{\mathbb{R}^N}\int\limits_{\{|z|\geq r\}}\frac{u(x+z)-u(x)}{|z|^{N+s}}\varphi(x) \, \mathrm{d}z\mathrm{d}x = \int\limits_{\mathbb{R}^N}\int\limits_{\{|z|\geq r\}}u(x)\frac{\varphi(x+z)-\varphi(x)}{|z|^{N+s}} \, \mathrm{d}z\mathrm{d}x \, .
		\end{align*}
		With the representation of the fractional Laplacian in \eqref{fraklap} the claim follows.
	\end{enumerate}
\end{proof}

\begin{proof}[Proof of Lemma \ref{alekslemma}]
	\label{proof2}
	By the divergence theorem of Gauß it is well-known that, on every bounded Lipschitz domain $\Omega\subseteq\mathbb{R}^N$, it holds
	\begin{align*}
	\int\limits_{\Omega}F_\varepsilon(u(x))\nabla u(x) \, \mathrm{d}x = 0
	\end{align*}
	for all $u\in H_0^1(\Omega)$. Let $(u_n)_n\subseteq C_c^\infty(\mathbb{R}^N)$ be such that $\supp u_n\subseteq B_n(0)\eqqcolon\Omega_n$ for every $n\in \mathbb{N}$ and
	\begin{align*}
	u_n\to u \quad \text{in} \ H^1(\mathbb{R}^N) \, .
	\end{align*}
	Therefore, we get
	\begin{align*}
	\int\limits_{\mathbb{R}^N}F_\varepsilon(u(x))\nabla u(x) \, \mathrm{d}x = \lim\limits_{n\to\infty} \int\limits_{\Omega_n}F_\varepsilon(u_n(x))\nabla u_n(x) \, \mathrm{d}x = 0 \, .
	\end{align*}
\end{proof}

\end{document}